 \newtheorem{theorem}{Theorem}[section]
 \newtheorem{corollary}{Corollary}[section]
 \newtheorem{lemma}{Lemma}[section]
\begin{document}
\title{Commutator formulas for gradient Ricci  shrinker and their application to linear stability}
\author{MANSOUR MEHRMOHAMADI}
\address{	
	Department of Mathematics and Computer Science\\
	Mahani Mathematical Research Center
	Shahid Bahonar University of Kerman
	Kerman, Iran}
\email{mansoor\_mehrmohamadi@math.uk.ac.ir}
		
\author{ASADOLLAH RAZAVI}
 \address{	
    	Department of Mathematics and Computer Science
    	Mahani Mathematical Research Center
    	Shahid Bahonar University of Kerman
    	Kerman, Iran}
 \email{arazavi.uk.ac.ir}
\date{\today}
\thanks{At this moment we have not yet decided wich journal to publish this article in.}

\keywords{Gradient Ricci soliton ,$\nu$-entropy,linear stability}

\begin{abstract}
In this paper we have found some commutator formulas between $div_{f},\Delta_{f,L},div_{f}^\dag,\Delta_{f}$ on closed orientable $GRS^+$ metrics(Gradient Ricci Shrinker), then with  them we have generalized a Theorem   of Cao and Zhu about necessary condition for linear stability of a $GRS^+$.
\end{abstract}
\maketitle
\section{Introduction}
\numberwithin{equation}{section}
A complete Riemannian manifold is called  \text{\it{ Ricci Soliton}}  if there exist a complete vector field $X$ and a real constant $\lambda$ such that
$$\mathit {Ric + { \mathcal{L}_{X}}g = \lambda g}$$
where Ric is Ricci tensor, 
a soliton is called  \text{\it{expanding,steady or shrinking}} if $\lambda<0,\lambda=0\ or\lambda>0$ respectively, for degenerate case $X=0$ soliton is called  \text{\it{ trivial}}, therefore Einstien manifolds are  special cases of Ricci solitons.  Whenever   $X=\nabla f$  for some smooth function $f \in {C^\infty }(M)$  the soliton is called  \text{\it {gradient}}, in this case for shrinking soliton we have 
\begin{equation}\label{sol}
	Ric + {\nabla ^2}f = \frac{1}{{2\tau }}g
\end{equation}
where $\tau$ is a positive constant, 
  $f$ is called  the  \text{\it{  potential function }}of soliton. In this paper we denote gradient Shrinking soliton metrics briefly with $GRS^{+}$.  For more information on Ricci soliton see \cite{cao}.

The concept of Ricci soliton was invented by Richard Hamilton in mid 80's as Riemannian metrics that up to diffeomorphism and scaling are fixed points of the Ricci flow equation.
Perelman in a remarkabale paper \cite{per} discovered severl variational structures for Ricci flow, one of them is the $\nu$-entropy. He proved that critical points of the  $\nu$-entropy are $GRS^+$ metrics, therefore this question arose that is this true that $GRS^+$ metrics are the local maximum of $\nu$-entropy, trivialy if the second variation of the $\nu$-entropy is positive, then soliton can not be the local maximum of  $\nu$-entropy but suppose that the second variation of the $\nu$-entropy alway is  nonpositive,  then is this true that soliton is the local maximum of  $\nu$-entropy ? Therefore it was necessary to calculate the 
second variation of the $\nu$-entropy.  Hamilton,Ilmanen and Cao in \cite{hail} calculated the second variation of the $\nu$-entropy for positive Einstein metrics. They  defined a $GRS^+$ which is \text{\it{linear stable}}   whenever the second variation of  the $\nu$-entropy  is nonpositive and otherwise  \text{\it{linear unstable}}.
   Hamilton conjectured that at least in dimension four, only linear stable $GRS^+$ are Einstien metrics with positive scalar curvature.
  Cao and Zhu in \cite{cao zhu}  calculated the second variation of the $\nu$-entropy for nontrivial $GRS^+$, they proved that for linear stability of $GRS^+$ it is neccesary that the first eigenvalue of the weighted  Lichnerowicz Laplace operator $\Delta_{f,L}$ restricted to transversal tensor (i.e $div_{f}h=0$) is not greater than zero and only eigentensor of zero shoud be Ricci tensor  $Ric$.

Finally Kr\"oncke in \cite{kroncke} proved that if every infinitesimal solitonic deformation of soliton is integrbale,  then $GRS^+$ is the local maximum of $\nu$-entropy if and only if the second variation of the $\nu$-entropy is nonpositive. 

 Kr\"oncke proved that this condition  is failed for  complex projective space  $\mathbb{C}{P^n}$ with Fubiny-Study metric  and although complex projective space  $\mathbb{C}{P^n}$ with Fubiny-Study metric is a linear stable $GRS^+$, but this $GRS^+$  is not the local maximum of $\nu$-entropy in the space of all of Riemannian metrics on $\mathbb{C}{P^n}$.\\

Subject of this paper is closed orientable gradient Ricci shrinking soliton (briefly $GRS^+$).  In the second section we will introduce our conventions
and necessary definitions for our work, then we bring the necessary formulas and theorems without proofs from other papers.
In the third section we will get several commutator formulas between some differential operators. We prove that
\begin{theorem}
For a closed  orientable $GRS^+$ $(M^{n}, g,f,\tau)$  and  $h \in {C^\infty }({S^2}({T^*}M)),\omega\in {\Omega^{1}(M)} $ and $a\in C^{\infty}(M)$ we have
	\begin{eqnarray}
		{\Delta _f}da &=& d{\Delta _f}a +\frac{1}{{2\tau }}da \nonumber\\
		di{v_f}{\Delta _f}\omega &=&  {\Delta _f}di{v_f}\omega  + \frac{1}{{2\tau }}di{v_f}\omega\nonumber\\
		{\mathcal{L}_{\# ({\Delta _f}\omega )}}g&=&	{\Delta _{f,L}}({\mathcal{L}_{\# \omega }}g)+ \frac{1}{2\tau }{\mathcal{L}_{\# \omega }}g   \nonumber
	\end{eqnarray}
\end{theorem}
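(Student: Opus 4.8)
The plan is to derive all three identities from the single structural fact that on a $GRS^+$ the Bakry--Émery Ricci tensor is $\mathrm{Ric}+\nabla^2f=\frac1{2\tau}g$, combined with the classical Bochner--Weitzenböck commutation rules. The soliton equation \eqref{sol} is precisely what converts every curvature term that appears into the scalar factor $\frac1{2\tau}$, so in each case I would first compute the relevant commutator in the unweighted/rough sense, then add the drift correction, and finally collapse $\mathrm{Ric}+\nabla^2f$ to $\frac1{2\tau}g$.

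For the first identity I would compute $[\Delta_f,d]$ on a function $a$ directly. Writing $\Delta_f=\Delta-\nabla_{\nabla f}$, the unweighted part gives the standard Ricci identity $\Delta(da)-d(\Delta a)=\mathrm{Ric}(\nabla a,\cdot)$, obtained by commuting the two covariant derivatives in $\nabla^k\nabla_k\nabla_i a$ and contracting the curvature into a Ricci term. For the drift part, the terms containing $\nabla^kf$ that carry second derivatives of $a$ cancel by symmetry of the Hessian, while differentiating the coefficient $\nabla^kf$ produces $(\nabla^2f)(\nabla a,\cdot)$. Adding the two contributions yields $(\mathrm{Ric}+\nabla^2f)(\nabla a,\cdot)=\frac1{2\tau}\,da$, which is the claim.

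The second identity I would obtain from the first by taking the formal adjoint with respect to the weighted inner product $\langle\cdot,\cdot\rangle_f=\int\langle\cdot,\cdot\rangle\,e^{-f}\,dV$. Here $\Delta_f$ is self-adjoint on both functions and $1$-forms, being the weighted connection Laplacian $-\nabla^{*_f}\nabla$, and integration by parts against $e^{-f}\,dV$ gives $d^{*_f}=-div_f$. Applying $(\,\cdot\,)^{*_f}$ to the operator identity $\Delta_f\,d=d\,\Delta_f+\frac1{2\tau}d$ and reversing the order of composition produces $div_f\,\Delta_f=\Delta_f\,div_f+\frac1{2\tau}div_f$, the second formula. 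A direct Bochner computation on $1$-forms is also available and would serve as an independent check.

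The third identity is the substantive one, and I expect it to be the main obstacle. I would set $X=\#\omega$ and $h=\mathcal L_Xg$, so $h_{ij}=\nabla_i\omega_j+\nabla_j\omega_i$, and compute both $\Delta_{f,L}h$ and $\mathcal L_{\#(\Delta_f\omega)}g=\nabla_i(\Delta_f\omega)_j+\nabla_j(\Delta_f\omega)_i$ in coordinates. The heart of the matter is commuting $\Delta_f$ past the covariant derivative $\nabla_i$: the rough Laplacian generates curvature terms via the Ricci identities, while the drift $\nabla_{\nabla f}$ generates terms linear in $\nabla^2f$. The curvature correction $2R_{ikjl}h^{kl}-R_{ik}h^k_j-R_{jk}h^k_i$ built into the Lichnerowicz Laplacian is designed to absorb exactly the commutator curvature, and I expect it to do so up to a residual Ricci contribution; combining that residue with the Hessian terms and invoking \eqref{sol} should collapse everything to $\frac1{2\tau}h$. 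The delicate points will be (i) symmetrizing correctly in $i,j$, and (ii) controlling the first-derivative curvature terms, for which I would use the contracted second Bianchi identity together with the soliton identities obtained by differentiating \eqref{sol}, namely $\nabla_iR=2R_{ij}\nabla^jf$ and the Bianchi-type relation expressing $\nabla_iR_{jk}-\nabla_jR_{ik}$ through $R_{ijkl}\nabla^lf$. Verifying that these conspire so that no leftover terms remain is where the computation is most error-prone.
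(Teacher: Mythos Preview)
Your proposal is correct and follows essentially the same route as the paper. For the first identity the paper does exactly your Bochner-plus-drift computation. For the second, the paper packages your adjoint argument as an $L^2$ computation (setting $a=div_f\Delta_f\omega-\Delta_f div_f\omega-\frac1{2\tau}div_f\omega$ and showing $\int_M a^2\,dm=0$ by moving operators across the pairing and invoking the first identity), which is just your formal-adjoint step written out with an inner product; your phrasing is slightly cleaner. For the third identity the paper carries out precisely the coordinate computation you outline, and the ``Bianchi-type relation'' you plan to use to kill the first-derivative curvature terms is, via the second Bianchi identity, equivalent to the paper's $div_f Rm=0$ from Theorem~\ref{us}; so your anticipated mechanism for step (ii) is the one the paper actually uses.
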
Note that these formulas can be extended to other kinds of the Ricci soliton and noncompact Ricci soliton, but this is not the subject of this paper. These commutator formulas are used to investigate relations between  Ricci  curvature bounds, spectrum of  the weighted Lichnerowicz  Laplacian  and properties of eigentensors of the weighted Lichnerowicz Laplacian.
In the fourth section inspired by work of Cao and He\cite{sym}, we will prove that the stability operator  $N$ of a $GRS^+$ on  $Im (div_f^\dag )$ equals to zero pointwisely.

\begin{theorem}
	 For a closed  orientable $GRS^+$ $(M^{n}, g,f,\tau)$, the stability operator(Jacobi field operator) of  the $\nu$-entropy $N$ on  $Im (div_f^\dag )$   pointwisely equals to zero, in other word for every vector field $X\in {\chi(M)}$ $$N({\mathcal{L}_{X }}g)=0$$
	\end{theorem}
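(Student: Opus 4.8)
The plan is to reduce the theorem to a single pointwise identity for $1$-forms and then read that identity off from the three commutator formulas of the preceding theorem together with the soliton identity $div_f Ric = 0$. Because $div_f^\dag$ is, up to a nonzero constant, the symmetrized covariant derivative $\omega\mapsto\tfrac12\mathcal{L}_{\#\omega}g$, the subspace $Im(div_f^\dag)$ coincides with $\{\mathcal{L}_X g:\,X\in\chi(M)\}$, so it suffices to prove $N(\mathcal{L}_{\#\omega}g)=0$ for every $\omega\in\Omega^1(M)$. I would start from the explicit form of the Cao--Zhu stability operator $N$, which on a symmetric $2$-tensor $h$ is built from a leading Lichnerowicz term $\tfrac12\Delta_{f,L}h$, a gauge term $div_f^\dag div_f h$, a Hessian term $\tfrac12\nabla^2 v_h$ coming from the variation of the potential (where $v_h$ is determined by an elliptic equation of the form $\Delta_f v_h+\tfrac{1}{2\tau}v_h = div_f div_f h$), and finitely many rank-one ``constraint'' terms proportional to the fixed tensors $Ric$ and $g$ whose coefficients are weighted integrals linear in $h$.

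The heart of the argument is to show that each piece sends a pure-gauge tensor back into $Im(div_f^\dag)$. For the Lichnerowicz term this is exactly the third commutator formula: $\Delta_{f,L}(\mathcal{L}_{\#\omega}g)=\mathcal{L}_{\#(\Delta_f\omega)}g-\tfrac{1}{2\tau}\mathcal{L}_{\#\omega}g$ is again a Lie derivative of $g$. The gauge term lies in $Im(div_f^\dag)$ by definition, and the Hessian term does too, since $2\nabla^2 v_h=\mathcal{L}_{\#(dv_h)}g$ for any function $v_h$. To dispose of the constraint terms I would use the first and second commutator formulas to evaluate $div_f div_f(\mathcal{L}_{\#\omega}g)$; a short computation then identifies $v_h$ as a constant multiple of $div_f\omega$. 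This has two consequences: the weighted average $\int_M v_h\,e^{-f}\,dV$ vanishes by the weighted divergence theorem, so any $g$-proportional constraint term drops out; and the $Ric$-proportional term has coefficient a multiple of $\int_M\langle Ric,\mathcal{L}_{\#\omega}g\rangle\,e^{-f}\,dV=-2\int_M\langle div_f Ric,\omega\rangle\,e^{-f}\,dV=0$, because $div_f Ric=0$ on any $GRS^+$ (contracted second Bianchi together with the shrinker identity $\nabla R=2\,Ric(\nabla f)$). Hence $N(\mathcal{L}_{\#\omega}g)\in Im(div_f^\dag)$.

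To pin the tensor down to zero I would invoke the diffeomorphism invariance of the $\nu$-entropy, following Cao and He. Since $\nu(\varphi^*g)=\nu(g)$ for every diffeomorphism and $g$ is a critical point, differentiating $\nu(\varphi_s^*g)$ twice along the flow of $X$ and discarding the acceleration term $\nu'(\mathcal{L}_X\mathcal{L}_X g)=0$ gives $\nu''(\mathcal{L}_X g,\mathcal{L}_X g)=0$, and polarization upgrades this to $\langle N(\mathcal{L}_X g),\mathcal{L}_Y g\rangle_{L^2_f}=0$ for all $X,Y$. This says precisely that $N(\mathcal{L}_X g)\perp_{L^2_f} Im(div_f^\dag)=\ker(div_f)^\perp$, i.e. $N(\mathcal{L}_X g)\in\ker(div_f)$. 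Combining the two inclusions gives $N(\mathcal{L}_X g)\in Im(div_f^\dag)\cap\ker(div_f)=\{0\}$, which is the claimed \emph{pointwise} vanishing.

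The main obstacle I anticipate is the precise bookkeeping of the lower-order terms. After the third-order Lichnerowicz contributions cancel between $\tfrac12\Delta_{f,L}h$ and the gauge term, one is left with $\tfrac{1}{2\tau}$-shifted pieces and Hessian-of-$f$ pieces, and these cancel only because the $\tfrac{1}{2\tau}$ corrections built into the three commutator formulas are tuned to the soliton equation $Ric+\nabla^2 f=\tfrac{1}{2\tau}g$. A purely computational alternative is to carry out this cancellation on the nose and verify $N(\mathcal{L}_{\#\omega}g)=0$ directly; but the two-step argument above avoids solving explicitly for $v_h$ and, via the diffeomorphism-invariance input, is the cleaner route to the pointwise conclusion.
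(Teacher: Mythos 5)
Your argument is correct, but it follows a genuinely different route from the paper. The paper proves the statement by brute force: it first computes each ingredient of $N(div_f^\dag\omega)$ explicitly --- Lemma~4.3 gives $div_fdiv_f(div_f^\dag\omega)=-(\Delta_f div_f\omega+\tfrac{1}{2\tau}div_f\omega)$, whence by uniqueness $\upsilon_h=-div_f\omega$; Lemma~4.2 gives $div_f^\dag div_f(div_f^\dag\omega)=-\tfrac12\Delta_{f,L}div_f^\dag\omega-\tfrac{1}{2\tau}div_f^\dag\omega+\tfrac12\nabla^2 div_f\omega$; and $div_fRic=0$ kills the $Ric$-coefficient --- and then watches all terms cancel identically. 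You instead establish only the weaker structural fact that every term of $N$ maps $Im(div_f^\dag)$ back into $Im(div_f^\dag)$ (for which the third commutator formula, the identity $2\nabla^2\upsilon=\mathcal{L}_{\nabla\upsilon}g$, and $div_fRic=0$ suffice), and then combine this with the gauge-invariance fact $\nu''|_{Im(div_f^\dag)}=0$, self-adjointness of $N$, and polarization to conclude that $N(\mathcal{L}_Xg)$ is simultaneously in $Im(div_f^\dag)$ and $L^2_{dm}$-orthogonal to it, hence zero. Your version buys economy: it avoids Lemmas~4.2 and~4.3 and never needs the explicit value of $\upsilon_h$ (only its existence and smoothness, plus the vanishing of the $Ric$-coefficient). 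What it gives up is the explicit cancellation, which in the paper doubles as a consistency check on the commutator formulas and produces the formula $\upsilon_{div_f^\dag\omega}=-div_f\omega$ that is reused later. Two small remarks: your final step is cleanest if you skip the detour through $\ker(div_f)$ and the Berger--Ebin splitting and simply test the orthogonality relation against $\mathcal{L}_Yg=N(\mathcal{L}_Xg)$ itself, which yields $\int_M|N(\mathcal{L}_Xg)|^2\,dm=0$ directly; and the stability operator of this paper has no $g$-proportional constraint term, so the part of your argument devoted to it is vacuous here (though harmless).
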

In fifth section we will extend Theorem (1.3) of Cao and Zhu in \cite{cao zhu}, we find a weaker bound other than zero $(-\frac{1}{2\tau})$.  We will prove two Theorems

\begin{theorem}
	A necessary condition for linear stability	 of closed oraiantable $GRS^+$  is that the  first eigenvalue of the weighted Lichnerowicz Laplacian $\Delta_{f,L}$ (except zero with one multiplicity and Ricci tensor  $Ric$ as eigentensor) is not greather than $-\frac{1}{2\tau}$.
\end{theorem}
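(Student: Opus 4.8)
The plan is to build on the machinery established earlier in the paper, particularly the commutator formulas of Theorem 1.1 and the vanishing result of Theorem 1.2. The essential idea is that Cao and Zhu's original theorem identifies a certain stability-relevant bound at the level zero, and here we wish to sharpen this to $-\frac{1}{2\tau}$ by exploiting the extra algebraic structure coming from the soliton equation \eqref{sol}. I would begin by recalling the precise form of the second variation of the $\nu$-entropy as computed by Cao and Zhu, expressed through the stability operator $N$ acting on the space of transversal symmetric two-tensors (those satisfying $div_f h = 0$). The key observation is that $N$ is built out of $\Delta_{f,L}$ together with lower-order terms, and that on the transversal space the analysis reduces to understanding the spectrum of $\Delta_{f,L}$.

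Next I would set up the decomposition of the space of symmetric two-tensors. By Theorem 1.2 we know that $N$ vanishes on $\mathrm{Im}(div_f^\dagger)$, i.e. on Lie-derivative tensors $\mathcal{L}_X g$. This means the genuinely nontrivial part of the stability analysis lives on the $L^2_f$-orthogonal complement of $\mathrm{Im}(div_f^\dagger)$, which is precisely the transversal space where $div_f h = 0$. I would use the commutator formula ${\mathcal{L}_{\#(\Delta_f \omega)}}g = \Delta_{f,L}(\mathcal{L}_{\#\omega}g) + \frac{1}{2\tau}\mathcal{L}_{\#\omega}g$ from Theorem 1.1 to understand how $\Delta_{f,L}$ interacts with the image of $div_f^\dagger$. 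This formula tells us that $\Delta_{f,L}$ maps Lie-derivative tensors to Lie-derivative tensors shifted by $\frac{1}{2\tau}$, so the eigenvalues of $\Delta_{f,L}$ restricted to $\mathrm{Im}(div_f^\dagger)$ are exactly the eigenvalues of $\Delta_f$ on one-forms shifted by $\frac{1}{2\tau}$. This is the source of the $-\frac{1}{2\tau}$ bound: the spurious eigenvalues on the Lie-derivative part, which do not reflect true instability, are shifted away from zero, and so the honest threshold for the transversal eigenvalues must be taken relative to this shift.

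The core of the argument is then to show that if the first nonzero transversal eigenvalue of $\Delta_{f,L}$ exceeds $-\frac{1}{2\tau}$, then one can produce a variation on which the second variation of $\nu$ is strictly positive, contradicting linear stability. Concretely, I would take an eigentensor $h$ with $\Delta_{f,L}h = \mu h$, $div_f h = 0$, $\mu > -\frac{1}{2\tau}$, and $h$ not proportional to $Ric$, and feed it into the second variation formula to obtain a positive value. I expect that the zero eigenvalue appearing with the Ricci tensor $Ric$ as eigentensor must be excluded because $Ric$ itself arises as the infinitesimal variation corresponding to scaling the soliton (via the identity $\Delta_{f,L}Ric = \frac{1}{2\tau}\cdot(\text{something})$ forced by differentiating \eqref{sol}), and such a deformation lies in the kernel for structural reasons rather than signaling instability.

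The main obstacle, I anticipate, will be carefully tracking the relationship between the $\frac{1}{2\tau}$ shift in the commutator formula and the precise constant in the second variation formula, and in particular verifying that the contributions from $\mathrm{Im}(div_f^\dagger)$ genuinely decouple so that the transversal bound is both necessary and stated relative to $-\frac{1}{2\tau}$ rather than $0$. This requires showing that the full space of variations splits cleanly, using the $L^2_f$-orthogonality of the decomposition together with the vanishing from Theorem 1.2, so that positivity of $N$ on any transversal eigentensor with eigenvalue above the threshold cannot be cancelled by the Lie-derivative directions. Handling the exclusion of $Ric$ as a special eigentensor at eigenvalue zero, and confirming that no other zero-eigenvalue transversal tensors can occur without forcing the second variation positive, will be the delicate bookkeeping step.
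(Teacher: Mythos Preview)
Your plan diverges from the paper's proof in a way that leaves a genuine gap, and it misidentifies where the bound $-\frac{1}{2\tau}$ actually comes from.

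The paper does \emph{not} reduce to transversal tensors. It takes an arbitrary eigentensor $h$ with $\Delta_{f,L}h=\lambda h$ and $\lambda>-\frac{1}{2\tau}$, and the whole point is to compute $\nu''_g(h)$ without assuming $div_f h=0$. The crucial step is to show that $\upsilon_h=0$ anyway: one applies the commutator $\Delta_f\,div_f div_f = div_f div_f\,\Delta_{f,L}$ (Theorem~\ref{divdivlich}) to get $\Delta_f(div_f div_f h)=\lambda\,div_f div_f h$, and then invokes the spectral gap $\lambda_1(\Delta_f)<-\frac{1}{2\tau}$ on functions (Theorem~\ref{spec}) to force $div_f div_f h=0$. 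This is precisely where the threshold $-\frac{1}{2\tau}$ enters. With $\upsilon_h=0$ and (for $\lambda\neq 0$) $\int\langle Ric,h\rangle\,dm=0$, the second variation becomes
\[
\nu''_g(h)=\Big(\tfrac{\lambda}{2}+\tfrac{1}{2\tau}\Big)|h|^2_{dm}+|div_f h|^2_{dm}>0,
\]
with the $|div_f h|^2$ term present and nonnegative. The case $\lambda=0$, $h\neq \mu\,Ric$, is then handled by Cauchy--Schwarz. Note also that $\Delta_{f,L}Ric=0$ exactly, not ``$\frac{1}{2\tau}\cdot(\text{something})$''.

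Your route via the Ebin--Berger splitting and Theorem~1.2 runs into trouble because the hypothesis to be contradicted only gives you \emph{some} eigentensor with eigenvalue $\lambda>-\frac{1}{2\tau}$, not a transversal one. Since $\Delta_{f,L}$ preserves the splitting, you can project, but if the eigentensor happens to lie entirely in $\mathrm{Im}(div_f^\dagger)$ you get $N(h)=0$ and no instability direction from your argument. You would need an independent proof that $\Delta_{f,L}$ has no eigenvalue above $-\frac{1}{2\tau}$ on $\mathrm{Im}(div_f^\dagger)$, and your remark about ``eigenvalues of $\Delta_f$ on one-forms shifted by $\frac{1}{2\tau}$'' does not supply this: there is no a priori upper bound on the spectrum of $\Delta_f$ on one-forms analogous to Theorem~\ref{spec}. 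In short, the $-\frac{1}{2\tau}$ is not coming from the Lie-derivative sector at all; it is the spectral gap for $\Delta_f$ on \emph{functions} that makes $\upsilon_h$ vanish and lets the direct computation go through.
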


\begin{theorem}
	Suppose that for a closed oraiantable $GRS^+$ the first eigenvalue of the weighted Lichnerowicz Laplacian $\Delta_{f,L}$ is not greather than $-\frac{1}{\tau}$ (except zero with one multiplicity and Ricci tensor  $Ric$ as eigentensor),  then soliton is linear stable.
\end{theorem}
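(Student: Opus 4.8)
The plan is to reduce the nonpositivity of the second variation to a spectral inequality on transverse tensors and then to use the commutator formulas of Theorem 1.1 to absorb the terms coming from the trace/gauge bookkeeping. Write $\langle\cdot,\cdot\rangle_f=\int_M\langle\cdot,\cdot\rangle e^{-f}\,dV$ for the weighted inner product, and recall that the soliton is linearly stable exactly when the stability operator satisfies $\langle Nh,h\rangle_f\le 0$ for all $h\in C^\infty(S^2(T^*M))$. Since $div_f^\dag$ is the $\langle\cdot,\cdot\rangle_f$-adjoint of $div_f$, there is an orthogonal splitting $S^2(T^*M)=\ker(div_f)\oplus\operatorname{Im}(div_f^\dag)$, i.e. each $h$ is a transverse tensor plus a gauge tensor $\mathcal L_Xg$. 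By Theorem 1.2, $N$ annihilates $\operatorname{Im}(div_f^\dag)$, and as $N$ is self-adjoint the mixed terms drop out, so $\langle Nh,h\rangle_f=\langle Nh_0,h_0\rangle_f$ with $h_0$ the transverse part. Hence it suffices to prove $N\le 0$ on $\ker(div_f)$.

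First I would note that the third identity of Theorem 1.1, namely $\mathcal L_{\#(\Delta_f\omega)}g=\Delta_{f,L}(\mathcal L_{\#\omega}g)+\tfrac1{2\tau}\mathcal L_{\#\omega}g$, upon taking $\langle\cdot,\cdot\rangle_f$-adjoints yields $div_f\,\Delta_{f,L}=(\Delta_f-\tfrac1{2\tau})\,div_f$. Thus $\Delta_{f,L}$ preserves $\ker(div_f)$, so the spectral hypothesis may be read on this transverse block, where $\operatorname{Ric}$ is the distinguished simple zero-eigentensor (the scaling direction, which also lies in $\ker N$ and is therefore discarded). On transverse tensors the Cao--Zhu second variation, after eliminating the induced variation of the potential function, takes the form $\langle Nh_0,h_0\rangle_f=\tfrac12\langle\Delta_{f,L}h_0,h_0\rangle_f+\mathcal R(h_0)$, where $\mathcal R$ is a lower-order term coupling to $\operatorname{tr}_g h_0$ through the auxiliary function $v$ solving a Poisson-type equation governed by the operator $\Delta_f+\tfrac1{2\tau}$.

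The core of the proof is the bound $\mathcal R(h_0)\le\tfrac1{2\tau}\lVert h_0\rVert_f^2$, and this is exactly where the first commutator identity $\Delta_f\,d=d\,\Delta_f+\tfrac1{2\tau}d$ (with its companion $div_f\,\Delta_f=\Delta_f\,div_f+\tfrac1{2\tau}div_f$ doing the matching bookkeeping one level down) enters. Applied to a $\Delta_f$-eigenfunction it shifts the eigenvalue up by $\tfrac1{2\tau}$ upon taking $d$; since the rough Laplacian is nonpositive on $1$-forms, $\langle\Delta_f\omega,\omega\rangle_f=-\lVert\nabla\omega\rVert_f^2\le 0$, every nonconstant $\Delta_f$-eigenfunction has eigenvalue $\le-\tfrac1{2\tau}$. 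This spectral gap is precisely what makes $\Delta_f+\tfrac1{2\tau}$ the borderline operator controlling $v$, and I would use it to establish the stated estimate on $\mathcal R$. Granting it, if the transverse spectrum of $\Delta_{f,L}$ apart from the $\operatorname{Ric}$-direction lies at or below $-\tfrac1\tau$, then $\tfrac12\langle\Delta_{f,L}h_0,h_0\rangle_f\le-\tfrac1{2\tau}\lVert h_0\rVert_f^2$ for $h_0\perp\operatorname{Ric}$, whence $\langle Nh_0,h_0\rangle_f\le 0$ and the soliton is linearly stable.

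The main obstacle is the precise treatment of the coupling term $\mathcal R$: one must verify that eliminating the potential variation really produces a quantity governed by $(\Delta_f+\tfrac1{2\tau})^{-1}$ applied to the trace data, that the constant so obtained does not exceed $\tfrac1{2\tau}$, and that the borderline case $-\tfrac1{2\tau}\in\operatorname{spec}(\Delta_f)$ (equivalently $du$ parallel, forcing a product/rigidity structure) is handled. The factor $2$ between the sufficient bound $-\tfrac1\tau$ here and the necessary bound $-\tfrac1{2\tau}$ of Theorem 1.3 is exactly the slack between the one-sided estimate $\mathcal R\le\tfrac1{2\tau}\lVert h_0\rVert_f^2$ used to force stability and the reverse realization of $\mathcal R$ used to manufacture instability; I do not expect these two theorems to pin down the sharp threshold.
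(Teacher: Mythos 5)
Your reduction to the transverse block is sound and is, in substance, the same strategy as the paper's: the paper arrives at the splitting by simultaneously diagonalizing $\Delta_{f,L}$ and $\Delta_{f,L}+div_f^\dag div_f$ (using their commutativity, Theorem \ref{shopdivlich}) and noting that a common eigentensor with $\lambda_i\ne\mu_i$ lies in $Im(div_f^\dag)\subset\ker N$ while one with $\lambda_i=\mu_i$ is transverse; your direct use of the Berger--Ebin splitting together with $N|_{Im(div_f^\dag)}=0$ and self-adjointness of $N$ reaches the same place with less machinery. The genuine gap sits at what you yourself label the core: the estimate $\mathcal R(h_0)\le\frac{1}{2\tau}\lVert h_0\rVert_f^2$ is asserted rather than proven, and your description of $\mathcal R$ is wrong. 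On $\ker(div_f)$ there is no coupling through the auxiliary function at all: $div_fh_0=0$ forces $div_fdiv_fh_0=0$, and since $\Delta_f+\frac{1}{2\tau}$ is injective on mean-zero functions (Theorem \ref{spec}; note this is the \emph{strict} bound $\lambda_1<-\frac{1}{2\tau}$, which your soft integration-by-parts argument only gives non-strictly), the unique solution is $\upsilon_{h_0}=0$, and likewise $div_f^\dag div_fh_0=0$. What survives is exactly
$$N(h_0)=\tfrac12\Delta_{f,L}h_0+\tfrac{1}{2\tau}h_0-Ric\,\frac{\int_M\langle Ric,h_0\rangle dm}{\int_M R\,dm},$$
so $\mathcal R(h_0)=\frac{1}{2\tau}\lVert h_0\rVert_f^2-\bigl(\int_M\langle Ric,h_0\rangle dm\bigr)^2/\int_M R\,dm$, and the bound you want is immediate from $\int_M R\,dm=2\tau\int_M|Ric|^2dm>0$ (Theorem \ref{us}). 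There is no $(\Delta_f+\frac{1}{2\tau})^{-1}$ applied to trace data to control; the obstacle you flag does not exist, but neither have you supplied the short argument that replaces it, so as written the proof is incomplete precisely at its decisive step.

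Two smaller holes in the same vein. First, discarding the $Ric$ direction requires showing $Ric\in\ker N$, which uses $\Delta_{f,L}Ric=0$, $div_fRic=0$ and again $\int_M R\,dm=2\tau\int_M|Ric|^2dm$ so that the projection term cancels $\frac{1}{2\tau}Ric$ exactly; you assert this without proof. Second, for a transverse eigentensor $h_i$ with $\lambda_i\ne 0$ you must still kill the Ricci projection before $N(h_i)=(\frac{\lambda_i}{2}+\frac{1}{2\tau})h_i$ holds; this follows from $\int_M\langle Ric,h_i\rangle dm=\lambda_i^{-1}\int_M\langle\Delta_{f,L}Ric,h_i\rangle dm=0$. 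With these three points filled in (all of which are easier than the analysis you anticipated), your argument closes and is arguably a cleaner route than the paper's simultaneous-diagonalization proof, since it dispenses with Theorem \ref{shopdivlich} at the cost of checking that the eigenspaces of $\Delta_{f,L}$ respect the Berger--Ebin splitting.
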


 \section{Preliminaries}

\subsection{Conventions and Notations} 
 
We define Riemannian curvature as 
\begin{eqnarray}
Rm(X,Y,Z,W) &=&  -  < R(X,Y)Z,W > 
\end{eqnarray}

with this convention we have these commutator formulas 
\begin{eqnarray}
{\nabla _i}{\nabla _j}{\omega _k} &=& {\nabla _j}{\nabla _i}{\omega _k} - {R_{ijk}}^p{\omega _p}= {\nabla _j}{\nabla _i}{\omega _k} + {R_{ijkp}}{\omega ^p}\nonumber\\
{\nabla _i}{\nabla _j}{T_{pq}} &=& {\nabla _j}{\nabla _i}{T_{pq}} - {R_{ijp}}^m{T_{mq}} - {R_{ijq}}^m{T_{pm}}\nonumber \\
&=& {\nabla _j}{\nabla _i}{T_{pq}} + {R_{ijpm}}T_q^m + {R_{ijqm}}T_p^m\nonumber
\end{eqnarray}
As explained in introduction,  we need the second variation of the $\nu$-entropy.  Here we introduce some convenions (our notations is similar to \cite{cao zhu} and \cite{hail} ).

For any symmetric covariant  two tensor $h,(h_{ij})$  and any 1-form $\omega,(\omega_i)$ we denote

\begin{eqnarray}
div   \omega = &g^{pq}\nabla_{p}{\omega_q} &, (div h)_{i}=g^{pq}\nabla_{p}h_{qi} \nonumber\\ 
di{v_f}\omega  = & {e^f}div({e^{ - f}}\omega)  &, div_{f}(\omega )=div(\omega)-\omega(\nabla f)\nonumber\\
di{v_f} h   = &  {e^f}div({e^{ - f}}h) &div_{f}h= div(h ) - h(\nabla f,-)\\
div_{f}\omega=&g^{pq}(\nabla_{p}\omega_q-\omega_p\nabla_q f)&,(div_{f}h)_{i}=g^{pq}(\nabla_{p}h_{qi} -h_{pi}\nabla_q f)\nonumber \\
{\Delta _f}  = & di{v_f}\nabla =&\Delta  -\nabla_{\nabla f}\nonumber \\ (Rm(h,-))_{ij}  = &R_{piqj}h^{pq}&\nonumber\\
di{v_f}^\dag \omega =&  - \frac{1}{2}{\mathcal{L}_{\# \omega }}g,&(div^\dag_{f}\omega)_{ij}=- \frac{1}{2}(\nabla_i\omega_j+\nabla_j \omega_i)\nonumber
\end{eqnarray}
In this paper we use the weighted  $L^2$-inner product with respect to measure $dm$: $$<-,->_{dm}=\int_{M}<-,->{(4\pi \tau )^{ - \frac{n}{2}}}{e^{ - f}}dv,dm={(4\pi \tau )^{ - \frac{n}{2}}}{e^{ - f}}dv, \int_{M} dm=1$$
where $<,>$ is inner product  induced from Riemannian metric of $M$ on  arbitrary tensor bundle, $dv$ is Riemannian volume form and $f$ is the potential function of soliton. Therefore  it is important that in our notation $<,>$ is pointwise inner product on tensors and $<,>_{dm}$ is global inner product for tensor fields and $|T|$ is norm of  some tensor $T$ in some point and
 $|T|_{dm}$  is norm of tensor field $T$. Note that Cauchy–Schwarz inequality is valid for $<,>_{dm} \ and \ |-|_{dm}$. \\ 
  With this $L^2$-inner product we have the weighted divergence theorem for arbitrary 1-form $\omega\in\Omega^{1}(M)$ and smooth function $a\in C^{\infty}(M)$ and $dm={(4\pi \tau )^{ - \frac{n}{2}}}{e^{ - f}}dv$,  we have
$$\int_{M} div_{f}   (\omega )dm=0, \int_{M} \Delta_{f}adm=0$$
Here  we need to find  formal-adjoint of the defined operators with respect to the weighted $L^2$-inner product.
We have $$div_{f} (a\omega)=adiv_{f}\omega+<\omega,da>$$ therefore formal-adjoint of $div_f$ on 1-forms is $-d$. On the other hand we have  $$div_{f}(h(\omega,-))=<div_{f}h,\omega>+<h,div_{f}^{\dag}\omega>$$  therefore formal-adjoint of $div_f$ on symmetric covariant two tensors is $div_{f}^\dag$, formal-adjoint of $\nabla$ on 1-forms and symmetric covariant two tensors is $-div_{f}$  and  $\Delta_{f}$ is a self-adjoint operator. Therefore  if we denote the formal adjoint with respect to the weighted $L^2$inner product of a diffrential operator $D$ with $D^{\dag}$,  then we have $$d^{\dagger}=-div_{f},\nabla^{\dagger}=-div_{f},(div_{f})^{\dagger}=div^\dag_{f}$$ i.e

\begin{eqnarray}
\int_{M} <div_{f}\omega ,a> dm &=&\int_{M} -<\omega,da> dm\nonumber\\
\int_{M} <div_{f}h,\omega>dm&=&\int_{M} <h,div_{f}^\dag \omega,>dm\nonumber\\
\int_{M} <\Delta_{f}h,k>dm &=& \int_{M} <h,\Delta_{f}k>dm	\nonumber
\end{eqnarray}

Considering  Bakry-Emery-Ricci tensor and Ricci solitons, Lott in \cite{lott},  defined
 a weighted version of Lichnerowicz Laplacian which we work with that
$$\Delta_{f,L} h=\Delta_{f} h +2Rm(h,-)-(Ric+\nabla^2 f).h-h.(Ric+\nabla^2 f)$$ such that $Rm(h,-)_{ij}  = R_{piqj}h^{pq}$,  $(A.B)_{ij}=g^{pq}A_{ip}B_{qj}$. For special case of Ricci solitons we have 
$$\Delta_{f,L} h=\Delta_{f} h +2Rm(h,-)-\frac{1}{\tau} h$$
Trivialy $\Delta_{f,L}$ is a self-adjoint operator with respect to the measure $dm$, very importnt note is that
$$tr\Delta_{f,L}h \ne \Delta_{f}tr h$$.

\subsection{W-entropy}
For a closed orientable Riemannian manifold  $(M^n,g)$ we define Perelman's  W-entropy as 

\begin{eqnarray}
W(g,f,\tau ) & = & \int_{M} {[\tau (R + {{\left| {\nabla f} \right|}^2}) + f - n]dm} \\
dm & = & {(4\pi \tau )^{ - \frac{n}{2}}}{e^{ - f}}dv\nonumber
\end{eqnarray}
such that g is Riemannian metric , $f$ is a smooth function on $M$,$\tau$ is a positive real number,  $R$ is scalar curvature and $dv$ is volume form of manifold, wih these notations for any positive real number $c$ and diffeomorphism  $\varphi\in Diff(M)$ we have
\begin{eqnarray}
W(cg,f,c\tau ) & = & W(g,f,\tau )\nonumber\\
\label{mul}
W(g,f,\tau ) & = & W({\varphi ^*}g,{\varphi ^*}f,\tau ),\nonumber
\label{diff}
\end{eqnarray}
where ${\varphi ^*}f(p) = f(\varphi (p))$.

Then we define Perelman's $ \nu$-entropy as 
\begin{eqnarray}
& &\nu (g) =\inf\big\{ { W(g,f,\tau ):f \in {C^\infty }(M),\tau  > 0}\big\}\nonumber \\
&&\int_{M} {{{(4\pi \tau )}^{ - \frac{n}{2}}}{e^{ - f}}dv = 1}\nonumber 
\end{eqnarray}
 
 Note that  $\nu$-entropy may be infinite  and therefore   minimizing pair $(f,\tau)$ does not exist. Anyway it is proved that (\cite{cao zhu} p.5 ), if $\nu$-entropy exists, then the minimizing pair  $(f,\tau)$ should satisfies these conditions

\begin{eqnarray}\label{nce}
&&\tau ( - 2\Delta f + {{\left| {\nabla f} \right|}^2} - R) - f + n + \nu  =0\\
&&\int_{M} { f dm}={\frac{n}{2}+\nu}
\end{eqnarray}

It is proved that for  a closed  $GRS^+$,  $\nu$-entropy is finite(\cite{kroncke} Remark (3.4)) and in the $C^2$-neighborhood of $GRS^+$  in the space of all  Riemannian metrics on $M$,  $\nu$-entropy exists and is finite and there exists an unique minimizing pair $(f,\tau)$ (\cite{kroncke} Remark 3.2 and \cite{cao zhu})

\begin{theorem}[{\bf  Fist Variation Formula}]\label{fv}
The first variation of the $\nu$-entropy for a closed Riemannian manifold $(M^{n},g)$ in the pertubation direction $h \in {C^\infty }({S^2}({T^*}M))$ is given by
\begin{eqnarray}
{\nu _g}^{'}(h) = \displaystyle \int_{M}{-\tau< h,Ric + {\nabla ^2}f - \frac{1}{{2\tau }}g > dm }       
                	\end{eqnarray}
                 \end{theorem}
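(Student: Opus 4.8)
The plan is to reduce the statement to Perelman's full first variation of the $W$-entropy and then exploit the two equations characterizing the minimizing pair. Throughout I take $(f,\tau)$ to be the minimizing pair realizing $\nu(g)=W(g,f,\tau)$ (which exists near a $GRS^+$), so that differentiating $\nu$ along $g+sh$ also involves the induced variations $\dot f$ and $\dot\tau$; thus $\nu_g'(h)=\frac{d}{ds}\big|_{s=0}W(g+sh,f_{g+sh},\tau_{g+sh})$ is the \emph{total} variation $\delta W(h,\dot f,\dot\tau)$. I would compute this total variation, isolate the asserted term, and then show that every remaining term is annihilated by the normalization and the Euler--Lagrange equations.

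First I would linearize the integrand of $W=\int_M[\tau(R+|\nabla f|^2)+f-n]\,dm$, using $\delta(dv)=\tfrac12(g^{ij}h_{ij})\,dv$, the standard scalar-curvature linearization $\delta R=-\langle h,Ric\rangle+\nabla^{i}\nabla^{j}h_{ij}-\Delta(g^{ij}h_{ij})$, and $\delta|\nabla f|^2=-\langle h,df\otimes df\rangle+2\langle\nabla f,\nabla\dot f\rangle$, together with $\delta\big((4\pi\tau)^{-n/2}e^{-f}\big)=\big(-\tfrac{n}{2\tau}\dot\tau-\dot f\big)(4\pi\tau)^{-n/2}e^{-f}$. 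The crucial manipulation is to move both derivatives off $\nabla^{i}\nabla^{j}h_{ij}$ and the Laplacian off $\Delta(g^{ij}h_{ij})$ via the weighted divergence theorem of Section 2, so that each integration by parts against the weight $e^{-f}$ generates the expected $\nabla f$ terms. After this, the two $\langle h,df\otimes df\rangle$ contributions cancel and the double integration by parts of $\nabla^{i}\nabla^{j}h_{ij}$ turns the bare $-\tau\langle h,Ric\rangle$ into $-\tau\langle h,Ric+\nabla^2 f\rangle$. Writing $\psi:=\tfrac12 g^{ij}h_{ij}-\dot f-\tfrac{n}{2\tau}\dot\tau$ for the first-order relative change of the density and $B:=\tau(2\Delta f-|\nabla f|^2+R)+f-n$, I expect to arrive at
\[
\delta W=-\tau\int_M\langle h,\,Ric+\nabla^2 f\rangle\,dm+\int_M\psi\,B\,dm+\int_M\dot f\,dm+\dot\tau\int_M(R+|\nabla f|^2)\,dm.
\]

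It then remains to dispose of the last three terms. The first necessary condition ($\tau(-2\Delta f+|\nabla f|^2-R)-f+n+\nu=0$) rearranges to $B\equiv\nu$, a constant, so $\int_M\psi B\,dm=\nu\int_M\psi\,dm$; and since every pair along the path is normalized, $\int_M dm=1$ is constant, whence $\int_M\psi\,dm=\frac{d}{ds}\int_M dm=0$ and this term vanishes. The same relation $\int_M\psi\,dm=0$ rewrites $\int_M\dot f\,dm=\tfrac12\int_M g^{ij}h_{ij}\,dm-\tfrac{n}{2\tau}\dot\tau$, and the piece $\tfrac12\int_M g^{ij}h_{ij}\,dm=-\tau\int_M\langle h,-\tfrac{1}{2\tau}g\rangle\,dm$ is exactly what upgrades the main term to $-\tau\langle h,Ric+\nabla^2 f-\tfrac{1}{2\tau}g\rangle$. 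Finally the residual scalar $\dot\tau\big(\int_M(R+|\nabla f|^2)\,dm-\tfrac{n}{2\tau}\big)$ vanishes: integrating the first necessary condition against $dm$, invoking the second ($\int_M f\,dm=\tfrac n2+\nu$) and the identity $\int_M(\Delta f-|\nabla f|^2)\,dm=\int_M\Delta_f f\,dm=0$, yields $\int_M(R+|\nabla f|^2)\,dm=\tfrac{n}{2\tau}$.

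I expect the main obstacle to be the weighted integration-by-parts bookkeeping in the second step: one must verify that the double integration by parts of $\nabla^i\nabla^j h_{ij}$ against $e^{-f}$ produces precisely the Hessian $\nabla^2 f$ paired with $h$ and that all $df\otimes df$ terms cancel, since a sign slip there would destroy the essential $Ric+\nabla^2 f$ combination. A secondary subtlety is that the $-\tfrac{1}{2\tau}g$ in the answer is not produced directly by the metric variation but only emerges once the normalization constraint is used to trade $\int_M\dot f\,dm$ for $\tfrac12\int_M g^{ij}h_{ij}\,dm$; keeping careful track of which terms are genuinely tensorial and which are scalar multiples killed by the Euler--Lagrange equations is the delicate part of the accounting.
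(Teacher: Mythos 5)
Your derivation is correct: the linearizations, the weighted integrations by parts producing $\nabla^2 f$ and cancelling the $df\otimes df$ terms, and the elimination of the residual scalar terms via $B\equiv\nu$, $\int_M\psi\,dm=0$, $\int_M\Delta_f f\,dm=0$ and $\int_M(R+|\nabla f|^2)\,dm=\tfrac{n}{2\tau}$ all check out. The paper offers no proof of its own here --- it simply cites Cao--Zhu, Lemma (2.2) --- and your argument is essentially the standard computation carried out there, so there is nothing substantive to contrast.
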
     
                   
              \begin{proof}

              	See \cite{cao zhu} Lemma (2.2) 
                \end{proof}

From this theorem  we conclude that if minimizing pair $(f,\tau)$ satisfies solitonic equation (\ref{sol}),  then $g$ is a critical point for  $\nu$-entropy in the space of all  Riemnnian metrics on $M$.  Conversely suppose that  for a closed Riemannian manifold   $(M^n,g)$, a smooth function $a\in C^{\infty}(M)$ and positive real number $c$ we have $Ric + {\nabla ^2}a = cg$, then from Theorem(\ref{fv}) and because  $\nu$-entropy is invriant with respect to the action of diffeomorphism group and scaling of metric  it follows that ${\nu _g}^{'}({\frac{1}{2}}L_{\nabla a}g-cg)=0$  therefore   ${\nu _g}^{'}(\nabla^2a-cg)=0$
  and $$0={\nu _g}^{'}(0) ={\nu _g}^{'}(Ric+\nabla^2 a -cg)={\nu _g}^{'}(Ric)={\nu _g}^{'}(Ric+\nabla^2 f - \frac{1}{{2\tau }}g) $$
  and we conclude that $$\int_{M}|Ric+\nabla^2 f - \frac{1}{{2\tau }}g|^2 dm =0$$ and finally 
  $$Ric+\nabla^2 f = \frac{1}{{2\tau }}g$$ therefore $g$ is  critical point for $\nu$-entropy in the space of all of Riemannian metrics on $M$ if and only if minimizing pair $(f,\tau)$ satisfies solitonic equation(\ref{sol}).    On the other hand Theorem (\ref{fv}) says that under Ricci flow as well as $\nu$-entropy is  finite, $\nu$-entropy is monotone increasing  and is constant if and only if initial metric is a $GRS^+$ and minimizing pair  $(f,\tau)$ satisfies solitonic equation. Here we can state Theorem  of Cao and Zhu in \cite{cao zhu} wich states  the exact experssion of the second variation of the $\nu$-entropy.
  \begin{theorem}[{\bf  Second Variation Formula}]\label{sec}  	
For a closed orientable $GRS^+$ $(M^{n}, g,f,\tau)$ the second variation of the $\nu$-entropy for any pertubation direction $h \in {C^\infty }({S^2}({T^*}M))$ is given by
  	\begin{eqnarray}
  		{\nu _g}^{''}(h)= \int_{M} { < N(h),h > dm}
  	\end{eqnarray}
  	here 
  	$$ N(h) = \frac{1}{2}{\Delta _f}h + Rm(h, - ) + div_f^\dag di{v_f}h+ \frac{1}{2}{\nabla ^2}{{\upsilon}_h} - Ric\frac{{\int_{M} { < Ric,h > dm} }}{{\int_{M}  {Rdm} }}$$
  	$$ =\frac{1}{2}{\Delta_{f,L}}h +\frac{1}{2\tau}h + div_f^\dag di{v_f}h +\frac{1}{2}{\nabla ^2}{{\upsilon}_h} - Ric\frac{{\int_{M} { < Ric,h > dm} }}{{\int_{M}  {Rdm} }}$$
  	in this experssion, ${\upsilon}_h$ is unique solution of the equation
  	\begin{eqnarray}
  		{\Delta _f}{\upsilon _h} + \frac{1}{{2\tau }}{\upsilon_h} = di{v_f}di{v_f}h  ,\int_{M}{\upsilon _h}dm=0
  	\end{eqnarray}
  	Here N is a  self-adjoint degenerate elliptic operator.
  \end{theorem}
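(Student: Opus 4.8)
The plan is to obtain the second variation by differentiating the First Variation Formula (Theorem \ref{fv}) a second time and exploiting that a $GRS^+$ is a critical point. Writing $E(g) := Ric_g + \nabla^2_g f_g - \tfrac{1}{2\tau_g}g$ for the soliton tensor, with $(f_g,\tau_g)$ the minimizing pair for the metric $g$, Theorem \ref{fv} reads $\nu'_g(h) = -\tau_g\int_M\langle h, E(g)\rangle\, dm_g$, and on a $GRS^+$ one has $E(g)=0$ by the solitonic equation (\ref{sol}). Hence, setting $g_s = g + sh$ and differentiating $s\mapsto \nu'_{g_s}(h)$ at $s=0$, every term in which the $s$-derivative lands on the scalar $\tau_{g_s}$, on the measure $dm_{g_s}$, or on the metric used to form the pairing is multiplied by $E(g)=0$ and drops out. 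This envelope-type cancellation is what makes the computation tractable: I never need the second variations of $\tau$ or of the volume form, and the problem collapses to
\[
\nu''_g(h) = -\,c\int_M \langle h,\dot E\rangle\, dm, \qquad \dot E := \tfrac{d}{ds}\big|_{s=0}E(g_s), \quad c>0,
\]
with $c$ the constant supplied by Theorem \ref{fv}. The task is then to compute the linearized soliton tensor $\dot E$ and to identify $N(h)$ with $-c\,\dot E$ rewritten via (\ref{sol}).

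Next I would expand $\dot E = D\,Ric(h) + \tfrac{d}{ds}(\nabla^2 f) - \tfrac{d}{ds}\big(\tfrac{1}{2\tau}g\big)$. For $D\,Ric(h)$ I insert the standard linearization of the Ricci tensor, which produces the rough Laplacian $-\tfrac12\Delta h$, the curvature term $Rm(h,-)$, a Hessian of the trace $\mathrm{tr}\,h$, and symmetrized divergences of $h$; for $\tfrac{d}{ds}(\nabla^2 f)$ I must keep both $\nabla^2\dot f$ and the variation of the Christoffel symbols acting on $\nabla f$; and $\tfrac{d}{ds}(\tfrac1{2\tau}g) = \tfrac1{2\tau}h - \tfrac{\dot\tau}{2\tau^2}g$. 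The decisive algebraic point is that all the $\nabla f$- and $\nabla^2 f$-dependent contributions recombine with the ordinary operators to upgrade them to their weighted analogues: the rough Laplacian and the undifferentiated curvature terms assemble, after repeated use of (\ref{sol}) and integration by parts against $dm$ via the adjoint relations of the Preliminaries, into $\tfrac12\Delta_{f,L}h + \tfrac1{2\tau}h$, while the symmetrized divergences collapse into $div_f^\dagger\, div_f\, h$. This is exactly the mechanism behind the identity $\tfrac12\Delta_f h + Rm(h,-) = \tfrac12\Delta_{f,L}h + \tfrac1{2\tau}h$ recorded in the statement.

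The two genuinely nonlocal ingredients, $\dot f$ and $\dot\tau$, are extracted by linearizing the conditions that pin down the minimizing pair. Linearizing the scalar constraint (\ref{nce}), $\tau(-2\Delta f + |\nabla f|^2 - R) - f + n + \nu = 0$, and simplifying with (\ref{sol}) yields an elliptic equation of the form $\big(\Delta_f + \tfrac1{2\tau}\big)\dot f = (\text{first-order expression in }h) + (\text{constants in }\dot\tau,\dot\nu)$; the normalization $\int_M f\,dm = \tfrac n2 + \nu$ fixes $\int_M\dot f\,dm$ and lets me identify $\upsilon_h$ with the suitably scaled, mean-zero part of $\dot f$, which then solves precisely $\Delta_f\upsilon_h + \tfrac1{2\tau}\upsilon_h = div_f\, div_f\, h$ with $\int_M\upsilon_h\,dm = 0$. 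Solvability and uniqueness follow from the self-adjoint Fredholm theory for $\Delta_f + \tfrac1{2\tau}$ once one checks that $div_f\,div_f\,h$ has zero $dm$-mean and is orthogonal to the kernel. The Hessian $\tfrac12\nabla^2\upsilon_h$ in $N$ is then nothing but the $\nabla^2\dot f$ contribution to $\dot E$. Finally $\dot\tau$ is determined by differentiating the volume constraint $\int_M dm = 1$ together with the $\tau$-optimality of the minimizer; the resulting multiple of $g$ in $\dot E$ I would rewrite through the soliton identity $g = 2\tau(Ric + \nabla^2 f)$, absorbing the $\nabla^2 f$-part into $\tfrac12\nabla^2\upsilon_h$ and turning the $Ric$-part into the nonlocal correction $-\,Ric\,\dfrac{\int_M\langle Ric,h\rangle\,dm}{\int_M R\,dm}$.

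With $N$ assembled, self-adjointness is immediate: $\nu''_g$ is the Hessian of a scalar functional, hence a symmetric bilinear form, so $\int_M\langle N(h),k\rangle\,dm = \int_M\langle h, N(k)\rangle\,dm$; alternatively each constituent ($\Delta_{f,L}$, the zeroth-order $\tfrac1{2\tau}\mathrm{Id}$, and $div_f^\dagger\,div_f$ as a composition of an adjoint pair) is individually self-adjoint for $\langle\,,\,\rangle_{dm}$ by the adjoint relations already recorded. Degenerate ellipticity follows from the principal symbol: the leading contribution comes from $\tfrac12\Delta_{f,L}$ and is nonnegative, but the symbol of $div_f^\dagger\,div_f$ vanishes along the gauge (Lie-derivative) directions, so $N$ is not strictly elliptic there and is only degenerate elliptic. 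I expect the main obstacle to be the bookkeeping in the middle step: tracking every $\nabla f$-term so that the ordinary Laplacian, curvature and divergence contributions collapse cleanly into $\Delta_{f,L}$, the $+\tfrac1{2\tau}h$ shift, and $div_f^\dagger\,div_f$, while simultaneously confirming that the linearized constraint reproduces the exact equation for $\upsilon_h$ and that the value of $\dot\tau$ delivers precisely the stated nonlocal coefficient.
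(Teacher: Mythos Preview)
The paper does not actually prove this theorem: its entire proof is the one-line citation ``see \cite{cao zhu} Theorem (1.1) and \cite{hail}.'' So there is no argument in the paper to compare against; the Second Variation Formula is imported wholesale from Cao--Zhu and Cao--Hamilton--Ilmanen as background.

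Your outline is, in substance, a faithful sketch of the Cao--Zhu computation those citations point to: differentiate the first variation at a critical point so that only $\dot E$ survives, linearize $Ric$ and $\nabla^2 f$, regroup the $\nabla f$-contributions into the weighted operators $\Delta_f$, $div_f$, $div_f^\dagger$, and read off $\upsilon_h$ and the nonlocal $Ric$-coefficient from the linearized constraint equations~(\ref{nce}) and the normalization. One small correction: you phrase the solvability of $(\Delta_f+\tfrac{1}{2\tau})\upsilon_h = div_f div_f h$ as a Fredholm orthogonality condition, but in fact the paper's Theorem~\ref{spec} gives $\lambda_1(\Delta_f) < -\tfrac{1}{2\tau}$, so $-\tfrac{1}{2\tau}$ is not in the spectrum at all and the operator is genuinely invertible on mean-zero functions; no orthogonality check is needed beyond $\int_M div_f div_f h\,dm = 0$. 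Also, your explanation of ``degenerate elliptic'' via the principal symbol is slightly off: $\tfrac{1}{2}\Delta_{f,L}+div_f^\dagger div_f$ by itself has strictly positive symbol (the paper computes this explicitly in the proof of Theorem~\ref{suf}); the degeneracy of $N$ comes rather from the nonlocal pieces $\tfrac{1}{2}\nabla^2\upsilon_h$ and the $Ric$-term, which conspire with the local part to annihilate all of $Im(div_f^\dagger)$, as the paper later shows in Theorem~\ref{kernel}.
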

\begin{proof}

	 see\cite{cao zhu} Theorem(1.1) and \cite{hail}.
	\end{proof}
 \subsection{Useful Formulas}
		\begin{theorem}\label{us}
		For a closed orientable $GRS^+$ $(M^{n}, g,f,\tau)$ we have
		\begin{eqnarray}
     	 Ric(\nabla f,-) = \displaystyle\frac{1}{2}dR & i.e \ \ div_{f}Ric=0\nonumber\\
		 g^{pq}\nabla_{p} R_{qjkl} =g^{pq} R_{pjkl}\nabla_{q} f &i.e\ \  div_{f}Rm=0\nonumber\\
		 \Delta_{f} Ric+2Rm(h,-) =\displaystyle\displaystyle\frac{1}{\tau}Ric& i.e \ \ \Delta_{f,L}Ric=0\nonumber\\
		 \Delta_{f}R  = \displaystyle\frac{1}{\tau}R-2|Ric|^2 & \nonumber\\ \Delta_{f}f  =  \displaystyle-\frac{1}{\tau}f+Const&\nonumber \\
		  \displaystyle\int_{M}{R dm}   =  \displaystyle 2\tau\int_{M}{|Ric|^2 dm} &\nonumber
		\end{eqnarray}
	\end{theorem}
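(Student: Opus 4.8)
The plan is to take the shrinker equation \eqref{sol}, $Ric + \nabla^2 f = \frac{1}{2\tau}g$, as the sole input and peel off the identities in order of differential order: first the two Bianchi-type (first-order) identities, then the second-order weighted Lichnerowicz identity for $Ric$, and finally the three scalar statements, which will drop out by tracing and integrating. As a preliminary bookkeeping step I would take the $g$-trace of \eqref{sol} to record $R + \Delta f = \frac{n}{2\tau}$, hence $\Delta f = \frac{n}{2\tau} - R$ and $\nabla \Delta f = -\,dR$; this scalar relation is used repeatedly below.

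To obtain the first line I would apply $g^{pq}\nabla_p$ to \eqref{sol}. On the $Ric$ term the contracted second Bianchi identity gives $g^{pq}\nabla_p R_{qj} = \tfrac12\nabla_j R$, while commuting the two covariant derivatives acting on $\nabla^2 f$ — using the Ricci identity for $1$-forms stated in the Preliminaries with $\omega = \nabla f$ — turns $g^{pq}\nabla_p\nabla_q\nabla_j f$ into $\nabla_j\Delta f$ plus a curvature term of the shape $R_{jk}\nabla^k f$. Feeding in $\nabla_j\Delta f = -\nabla_j R$ makes everything collapse to $Ric(\nabla f,-) = \tfrac12 dR$. Unwinding the definition $(div_f Ric)_j = g^{pq}\nabla_p R_{qj} - R_{pj}\nabla^p f = \tfrac12\nabla_j R - Ric(\nabla f,-)_j$ shows this is exactly $div_f Ric = 0$.

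For the second line I would contract the full second Bianchi identity to get $g^{pq}\nabla_p R_{qjkl} = \nabla_k R_{jl} - \nabla_l R_{jk}$, and then use Hamilton's soliton identity $\nabla_k R_{jl} - \nabla_l R_{jk} = g^{pq}R_{pjkl}\nabla_q f$ (obtained by differentiating \eqref{sol} once and commuting the third derivatives of $f$); this is precisely $div_f Rm = 0$. The technical heart is the third line, $\Delta_{f,L} Ric = 0$, i.e. $\Delta_f Ric + 2Rm(Ric,-) = \tfrac1\tau Ric$. I would derive it by taking a further divergence of the now-established $div_f Rm = 0$ (equivalently, by applying $\Delta_f$ to \eqref{sol} and commuting), repeatedly invoking the Ricci commutation formula for two-tensors from the Preliminaries and feeding back $Ric(\nabla f,-)=\tfrac12 dR$; the normalization $\tfrac{1}{2\tau}$ in \eqref{sol} is what produces the clean coefficient $\tfrac1\tau$. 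Keeping track of the curvature contractions here is where all the care is needed, and this is the main obstacle.

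The remaining three identities are then essentially free. Taking the $g$-trace of $\Delta_{f,L}Ric = 0$ — noting that $\Delta_f$ commutes with tracing (even though $\Delta_{f,L}$ does not, as warned in the Preliminaries) and that $g^{ij}R_{piqj}R^{pq} = |Ric|^2$ — gives $\Delta_f R = \tfrac1\tau R - 2|Ric|^2$. For the potential function I would first establish the auxiliary constant $R + |\nabla f|^2 - \tfrac1\tau f \equiv Const$ by differentiating it, substituting $\nabla_i\nabla_j f = \tfrac{1}{2\tau}g_{ij} - R_{ij}$ from \eqref{sol} and using the first identity to cancel the leftover $R_{ij}\nabla^j f$; combining this with $\Delta f = \tfrac{n}{2\tau} - R$ and $\Delta_f f = \Delta f - |\nabla f|^2$ yields $\Delta_f f = -\tfrac1\tau f + Const$. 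Finally, integrating $\Delta_f R = \tfrac1\tau R - 2|Ric|^2$ against $dm$ and using $\int_M \Delta_f R\,dm = 0$ from the weighted divergence theorem gives $\int_M R\,dm = 2\tau\int_M |Ric|^2\,dm$.
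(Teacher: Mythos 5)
Your proposal is correct: each identity follows from the shrinker equation in the order you describe (contracted second Bianchi plus commutation for the first two, a further divergence for the weighted Lichnerowicz identity on $Ric$, then traces, the conserved quantity $R+|\nabla f|^2-\frac{1}{\tau}f$, and the weighted divergence theorem). The paper itself offers no proof, deferring entirely to Lemma 2.1 of Petersen--Wylie \cite{wyl}, and your argument is precisely the standard derivation given there, so the two approaches coincide.
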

     \begin{proof}
	see \cite{wyl} Lemma(2.1).
      	\end{proof}
       \begin{theorem}\label{spec}
      	For a closed orientable $GRS^+$ $(M^{n}, g,f,\tau)$, the first eigenvalue of the weighted Laplacian on functions $\Delta_{f}$ is stricly less than $-\frac{1}{2\tau}$.
      	$$(\lambda_{1} <-\frac{1}{2\tau})$$
    \end{theorem}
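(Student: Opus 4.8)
The plan is to realize this as a Bakry--Émery version of the Lichnerowicz eigenvalue estimate, together with a rigidity argument that forces the inequality to be strict. Since $M$ is closed, the weighted Laplacian $\Delta_f$ has discrete spectrum and is self-adjoint and nonpositive with respect to $dm$; its top eigenvalue is $0$, attained only by the constants. Writing $\lambda_1$ for the largest \emph{nonzero} eigenvalue, I fix a corresponding eigenfunction $u$, so that $\Delta_f u = \lambda_1 u$ with $u$ nonconstant and $\int_M u\,dm=0$. Integrating by parts against $dm$ via the rule $\nabla^{\dagger}=-div_f$ gives $\int_M |\nabla u|^2\,dm = -\int_M u\,\Delta_f u\,dm = -\lambda_1\int_M u^2\,dm>0$, so it suffices to prove $\lambda_1\le -\frac{1}{2\tau}$ and then to exclude equality.

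The main tool is the weighted (Bakry--Émery) Bochner formula
\begin{eqnarray}
\tfrac{1}{2}\Delta_f|\nabla u|^2 = |\nabla^2 u|^2 + <\nabla u,\nabla\Delta_f u> + (Ric+\nabla^2 f)(\nabla u,\nabla u).\nonumber
\end{eqnarray}
I would integrate this identity against $dm$. The left-hand side integrates to zero by the weighted divergence theorem $\int_M \Delta_f(\cdot)\,dm=0$ recorded in Section 2. On the right, the eigenvalue equation gives $<\nabla u,\nabla\Delta_f u>=\lambda_1|\nabla u|^2$, while the solitonic equation \eqref{sol} gives $(Ric+\nabla^2 f)(\nabla u,\nabla u)=\frac{1}{2\tau}|\nabla u|^2$. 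Hence
\begin{eqnarray}
0 = \int_M |\nabla^2 u|^2\,dm + \Big(\lambda_1+\tfrac{1}{2\tau}\Big)\int_M|\nabla u|^2\,dm.\nonumber
\end{eqnarray}

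Since $\int_M|\nabla u|^2\,dm>0$ and $\int_M|\nabla^2 u|^2\,dm\ge 0$, this forces $\lambda_1+\frac{1}{2\tau}\le 0$, i.e. $\lambda_1\le -\frac{1}{2\tau}$. To upgrade to a strict inequality I would argue by contradiction: equality $\lambda_1=-\frac{1}{2\tau}$ would make $\int_M|\nabla^2 u|^2\,dm=0$, hence $\nabla^2 u\equiv 0$ on $M$. On a closed manifold this is impossible for a nonconstant $u$, since $\nabla^2 u=0$ implies $\Delta u=\mathrm{tr}\,\nabla^2 u=0$, so $\int_M|\nabla u|^2\,dv=-\int_M u\,\Delta u\,dv=0$ and $u$ is constant --- contradicting that $u$ is an eigenfunction for the nonzero eigenvalue $\lambda_1$. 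Therefore $\lambda_1<-\frac{1}{2\tau}$. The only genuinely delicate point is this rigidity step: the inequality itself is automatic once the Bochner identity is integrated, and all the analytic ingredients (discreteness of the spectrum, the weighted integration-by-parts rules, and $\int_M\Delta_f(\cdot)\,dm=0$) are already available in Section 2; the work lies in observing that $|\nabla^2 u|^2$ cannot vanish identically on a closed soliton for a nonconstant function.
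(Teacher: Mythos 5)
Your proof is correct and complete: the integrated weighted Bochner identity $0=\int_M|\nabla^2u|^2\,dm+\bigl(\lambda_1+\tfrac{1}{2\tau}\bigr)\int_M|\nabla u|^2\,dm$, combined with the rigidity step ($\nabla^2u\equiv0$ forces $u$ constant on a closed manifold, contradicting $\lambda_1\neq0$), gives exactly the strict bound claimed. The paper offers no argument of its own here --- it merely cites Cao--Zhu, p.~9 --- and your Bakry--\'Emery Lichnerowicz-type argument is essentially the standard proof given in that reference, so there is nothing to fault.
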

\begin{proof}
      	see \cite{cao zhu} page 9.
          \end{proof}
       From this theorem it follows that ${\upsilon}_h$ function in the second variation formula of the $\nu$-entropy exists and is unique.
  \section{Commutator Formulas}
  In this  section we will prove  commutator formulas between $div^\dag_{f},div_{f},\Delta_{f},\Delta_{f,L}$ on a closed orientable $GRS^+$. 
  Stability operator of the $\nu$-entropy has complicated formula, this operator is related to the weighted Lichnerowicz Laplacian.
  One of  the difficulties in computation of the second variation for a given pertubation direction, is computaion of an unknown function $\upsilon_{h}$ whose  Hessian  is in the stability operator. This function satisfies  the second order differential equation
  \begin{eqnarray}
  {\Delta _f}{\upsilon _h} + \frac{1}{{2\tau }}{\upsilon_h} = di{v_f}di{v_f}h \nonumber
  \end{eqnarray}
  note that from  Theorem (\ref{spec}) this function is unique,
  in some situation for example relation between linear stability and dynamical stability, from Ebin-Berger decomposition  Theorem  (see \cite{berger} Collorary(4.1) ),  tangent space to metric $g$,  i.e ${C^\infty }({S^2}({T^*}M))$  decomposites  to two orthogonal subspaces
  \begin{eqnarray}
  {C^\infty }({S^2}({T^*}M)) = Ker(di{v_f}) \oplus Im (div_f^\dag )\nonumber 
  \end{eqnarray}
 Then because $\nu$-entropy is invariant under the scaling and action of diffeomorphism group $Diff(M)$ on metric $g$ and since tangent vector to action of diffeomorphism group on metric is  ${L_{\# \omega }}g =-2div_f^\dag\omega , \omega \in \Omega^{1}(M)$, therefore the second variation of the $\nu$-entropy on $Im (div_f^\dag )$   is zero, hence in this situation we can assume that  $div_f h=0$ so $\upsilon_h=0$.
  In general  except for a few special cases  we have to  find unstability direction case by case and state by state, therefore we have to find $\upsilon_{h}$.
  Now because it is difficult to find $\upsilon_{h}$, we understand that  if we take $\upsilon_{h}=div_{f}div_{f}k$ for unknown tensor $k\in C^\infty ({S^2}({T^*}M))$, then we have $$\Delta_{f}div_{f}div_{f}k+\frac{1}{2\tau}div_{f}div_{f}k=div_{f}div_{f}h$$  now if we find commutator between $div_{f}div_{f}$ and $\Delta_{f}$, then we get a better understanding  of relation beatween $\upsilon_{h}$ and $h$. After this  we  find our commutator formulas and specially we find $div_fdiv_{f}\Delta_{f,L}k=\Delta_{f}div_{f}div_{f}k$ for $k \in C^\infty({S^2}({T^*}M))$ (indeed this is why that we found our commutator formulas ).  
  Now if $\upsilon_{h}=div_{f}div_{f}k$, then we have
     $$div_{f}div_{f}(\Delta_{f,L}k+\frac{1}{2\tau}k)=div_{f}div_{f}h$$
   Therefore if  $h=\Delta_{f,L}k+\frac{1}{2\tau}k$,  then  $\upsilon_{h}=div_{f}div_{f}k$. Now whenever we work with $k$ instead $h$ i.e given  $k \in C^\infty({S^2}({T^*}M))$,  then $h=\Delta_{f,L}k+\frac{1}{2\tau}k$
and $\upsilon_{h}$ is found. In the other hand we start with  $\upsilon_{h}$ as $\upsilon_{h}=div_{f}div_{f}k$, and then we find an $h\in  C^\infty({S^2}({T^*}M))$for that.
  After we have found our commutator formulas we found that Deruelle, Alix  \cite{alex} already obtained one of  our commutator formulas with only a time derivative difference(Theorem\ref{divlich}), indeed the order and method of our initial proof of  our results  is almost exactly the same as the method of Deruelle. Here we give another order and  proof for our commutator formulas. The point is that we have found them without knowing that Deruelle had already reached to these formulas.  
   \begin{theorem}\label{divda}
   	For a closed orientable $GRS^+$ $(M^{n}, g,f,\tau)$and any smooth function $a \in C^{\infty}(M)$ we have
 	\begin{eqnarray}
 		{\Delta _f}da = d{\Delta _f}a +\frac{1}{{2\tau }}da
 	\end{eqnarray}
  \end{theorem}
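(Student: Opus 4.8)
The plan is to verify the identity pointwise by expanding both sides in a local frame and subtracting. Writing $\Delta_f = \Delta - \nabla_{\nabla f}$, where $\Delta$ denotes the rough (connection) Laplacian, I would set $\omega = da$, so that $\omega_k = \nabla_k a$, and compute the components $(\Delta_f\, da)_k$ and $(d\Delta_f a)_k = \nabla_k(\Delta_f a)$ separately, aiming to read off their difference.

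For the first term the heart of the matter is a Bochner-type commutation. Beginning from $\Delta \nabla_k a = g^{ij}\nabla_i\nabla_j\nabla_k a$, I would first use the symmetry of the Hessian to rewrite $\nabla_j\nabla_k a = \nabla_k\nabla_j a$, and then apply the stated second-order commutator formula $\nabla_i\nabla_k\omega_j = \nabla_k\nabla_i\omega_j + R_{ikjp}\omega^p$ to the $1$-form $\nabla a$ in order to pull $\nabla_k$ outside the trace. The contracted curvature term then collapses to the Ricci tensor under the paper's sign convention, giving
\[
\Delta \nabla_k a = \nabla_k \Delta a + R_{kp}\nabla^p a .
\]

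For the second term I would differentiate $\Delta_f a = \Delta a - (\nabla^p f)\nabla_p a$ directly. The product rule yields two contributions: the piece $-(\nabla^p f)\nabla_k\nabla_p a$, which by symmetry of the Hessian coincides with the drift term $-(\nabla^p f)\nabla_p\nabla_k a$ already present in $\Delta_f\, da$ and therefore cancels in the difference; and the piece $-(\nabla_k\nabla_p f)\nabla^p a$, which survives. Subtracting the two expansions, all terms involving second derivatives of $a$ cancel and the remainder is exactly $(R_{kp} + \nabla_k\nabla_p f)\nabla^p a$. Here the soliton equation $Ric + \nabla^2 f = \frac{1}{2\tau}g$ finishes the proof, collapsing the bracket to $\frac{1}{2\tau}g_{kp}$ and hence the whole difference to $\frac{1}{2\tau}\nabla_k a = \frac{1}{2\tau}(da)_k$.

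The only delicate point I anticipate is sign-convention bookkeeping: confirming that the curvature contraction $g^{ij}R_{ikjp}$ produces $+Ric$ under the convention $Rm(X,Y,Z,W) = -\langle R(X,Y)Z,W\rangle$, and checking that the two drift terms cancel with matching signs. Conceptually nothing deeper is needed — the entire content is that the \emph{error} terms generated by commuting derivatives in the two computations assemble precisely into $Ric + \nabla^2 f$, at which point the soliton equation does the work. This same mechanism is what I would expect to drive the other two (tensorial) commutator identities stated in the theorem.
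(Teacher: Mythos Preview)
Your proposal is correct and follows essentially the same route as the paper: both compute $\Delta_f\,da - d\Delta_f a$ by splitting off the rough Laplacian and drift parts, cancel the matching $\nabla^2 a(\nabla f,-)$ terms, invoke the Bochner identity $\Delta\,da - d\Delta a = Ric(\nabla a,-)$, and then apply the soliton equation to $(Ric+\nabla^2 f)(\nabla a,-)$. The only difference is presentational --- you work in indices and derive the Bochner step explicitly from the curvature commutator, while the paper writes the same computation in coordinate-free notation.
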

  \begin{proof}
  	
 	\begin{eqnarray}
 	{\Delta _f}da - d{\Delta _f}a &=& \Delta da - {\nabla _{\nabla f}}da - (d\Delta a - d < \nabla a,\nabla f > ) =\nonumber \\
 	&=& \Delta da - {\nabla ^2}a(\nabla f, - ) - d\Delta a + {\nabla ^2}a(\nabla f, - ) + {\nabla ^2}f(\nabla a, - )\nonumber \\
 	&=& \Delta da - d\Delta a + {\nabla ^2}f(\nabla a, - ) = (Ric + {\nabla ^2}f)(\nabla a, - ) =\nonumber\\
 	&=&\frac{1}{{2\tau }}da\nonumber 
 \end{eqnarray}
\end{proof}
   \begin{theorem}\label{divomega}
       	For a closed orientable $GRS^+$ $(M^{n}, g,f,\tau)$ and $\omega\in\Omega^{1}(M)$ we have
     	\begin{eqnarray}
		di{v_f}{\Delta _f}\omega =  {\Delta _f}di{v_f}\omega  + \frac{1}{{2\tau }}di{v_f}\omega \nonumber
	    \end{eqnarray}
        \end{theorem}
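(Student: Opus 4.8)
The plan is to deduce this commutator identity from Theorem \ref{divda} by a formal-adjoint (duality) argument, rather than recomputing from scratch. The key observation is that the operators involved already have known adjoints with respect to the weighted inner product $\langle\cdot,\cdot\rangle_{dm}$: the preliminaries record that the adjoint of $\mathrm{div}_f$ on $1$-forms is $-d$ (equivalently $d^\dagger=-\mathrm{div}_f$) and that $\Delta_f$ is self-adjoint. Since $M$ is closed there are no boundary contributions, so the weighted divergence theorem applies freely and these adjoint relations may be used without restriction.

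First I would pair each side of the claimed identity against an arbitrary test function $a\in C^\infty(M)$ and transfer every operator onto $a$. Using the adjoint of $\mathrm{div}_f$ and then the self-adjointness of $\Delta_f$,
\[
\langle \mathrm{div}_f\Delta_f\omega,\,a\rangle_{dm}=\langle \Delta_f\omega,\,-da\rangle_{dm}=\langle \omega,\,-\Delta_f\,da\rangle_{dm},
\]
while in the same way
\[
\langle \Delta_f\,\mathrm{div}_f\omega,\,a\rangle_{dm}=\langle \mathrm{div}_f\omega,\,\Delta_f a\rangle_{dm}=\langle \omega,\,-d\,\Delta_f a\rangle_{dm}.
\]
Subtracting and invoking Theorem \ref{divda}, namely $\Delta_f\,da-d\,\Delta_f a=\tfrac{1}{2\tau}da$, the two right-hand sides collapse to $\langle \omega,\,-(\Delta_f\,da-d\,\Delta_f a)\rangle_{dm}=\langle \omega,\,-\tfrac{1}{2\tau}da\rangle_{dm}$. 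Applying the adjoint relation once more in the form $\langle \omega,-da\rangle_{dm}=\langle \mathrm{div}_f\omega,a\rangle_{dm}$ gives
\[
\langle \mathrm{div}_f\Delta_f\omega-\Delta_f\,\mathrm{div}_f\omega,\,a\rangle_{dm}=\tfrac{1}{2\tau}\langle \mathrm{div}_f\omega,\,a\rangle_{dm}.
\]
Since this holds for every $a\in C^\infty(M)$ and both sides are smooth functions, equality of all $dm$-pairings forces pointwise equality, yielding $\mathrm{div}_f\Delta_f\omega-\Delta_f\,\mathrm{div}_f\omega=\tfrac{1}{2\tau}\mathrm{div}_f\omega$, which is the assertion.

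I expect the only delicate point to be sign bookkeeping, since $\mathrm{div}_f$ is adjoint to $-d$ and two sign flips must cancel correctly; compactness of $M$ removes any worry about boundary terms. As an alternative I could give a direct tensorial proof, expanding $\mathrm{div}_f\omega=\mathrm{div}\,\omega-\omega(\nabla f)$ and $\Delta_f=\Delta-\nabla_{\nabla f}$, commuting $\nabla$ past the rough Laplacian to produce a Ricci term via the Bochner commutation formula, and then eliminating the curvature and Hessian-of-$f$ contributions using the soliton equation \eqref{sol} together with $\mathrm{div}_f \operatorname{Ric}=0$ from Theorem \ref{us}. This route recovers the same constant $\tfrac{1}{2\tau}$ but is computationally heavier, so the duality argument is preferable.
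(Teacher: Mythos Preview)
Your proof is correct and follows essentially the same duality strategy as the paper: both arguments reduce the identity to Theorem~\ref{divda} via the adjoint relations $d^\dagger=-\mathrm{div}_f$ and $\Delta_f^\dagger=\Delta_f$. The only cosmetic difference is that the paper chooses the specific test function $a=\mathrm{div}_f\Delta_f\omega-\Delta_f\mathrm{div}_f\omega-\tfrac{1}{2\tau}\mathrm{div}_f\omega$ and shows $\int_M a^2\,dm=0$, whereas you pair against an arbitrary $a$; the underlying computation is identical.
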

    \begin{proof}
    	
	    Take \ \ $a=di{v_f}{\Delta _f}\omega -  {\Delta   _f}di{v_f}\omega  - \frac{1}{{2\tau }}di{v_f}\omega $, \ Now we have  
	   \begin{eqnarray}
		\int_{M} a^2 dm&=&\nonumber\\
		&=&\int_{M} |di{v_f}{\Delta _f}\omega  - {\Delta _f}di{v_f}\omega  - \frac{1}{{2\tau }}di{v_f}\omega|^{2}dm\nonumber\\
		&=&\int_{M}< di{v_f}{\Delta _f}\omega  - {\Delta _f}di{v_f}\omega  - \frac{1}{{2\tau }}di{v_f}\omega, a>dm\nonumber\\
		&=&\int_{M}<\omega, - {\Delta _f}da + d{\Delta _f}a + \frac{1}{{2\tau }}da>dm \nonumber\\
		&=&0\nonumber
	       \end{eqnarray}	
	Now since $\int_{M} a^2  \ dm=0$ and $M$ is compact, therefore we conclude that $a=0$  
	i.e 
	$di{v_f}{\Delta _f}\omega =  {\Delta _f}di{v_f}\omega  + \frac{1}{{2\tau }}di{v_f}\omega$
	\end{proof}
     \begin{theorem}\label{lielap}
	For a closed orientable $GRS^+$$(M^{n}, g,f,\tau)$ and  $\omega\in\Omega^{1}(M)$ we have
	\begin{eqnarray}
		{\Delta _{f,L}}({\mathcal{L}_{\# \omega }}g)+ \frac{1}{2\tau }{\mathcal{L}_{\# \omega }}g  &=& {\mathcal{L}_{\# ({\Delta _f}\omega )}}g 
	  \end{eqnarray}
        \end{theorem}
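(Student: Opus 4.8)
The plan is to convert the statement into a commutator between $\Delta_{f,L}$ and $div_f^\dag$. Recalling from the conventions that $\mathcal{L}_{\#\omega}g=-2\,div_f^\dag\omega$, the asserted identity is equivalent to the operator identity on $1$-forms
\[
\Delta_{f,L}\big(div_f^\dag\omega\big)+\frac{1}{2\tau}\,div_f^\dag\omega=div_f^\dag\big(\Delta_f\omega\big).
\]
Conceptually this is the weighted analogue of the classical fact that the symmetrized covariant derivative $\delta^\ast=\tfrac12\mathcal{L}$ intertwines a Hodge/Bochner Laplacian on $1$-forms with the Lichnerowicz Laplacian on symmetric $2$-tensors; the extra zeroth-order coefficient $\tfrac{1}{2\tau}$ is precisely what the soliton equation $Ric+\nabla^2 f=\tfrac{1}{2\tau}g$ feeds into the weighted Bochner formula, exactly as the same constant appeared in Theorems \ref{divda} and \ref{divomega}.

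Since $\Delta_{f,L}$ and $\Delta_f$ are self-adjoint and $div_f^\dag$ is the formal adjoint of $div_f$ for the measure $dm$, taking $L^2(dm)$-adjoints shows this is equivalent to the symmetric-$2$-tensor identity
\[
div_f\big(\Delta_{f,L}k\big)+\frac{1}{2\tau}\,div_f k=\Delta_f\big(div_f k\big),\qquad k\in C^\infty(S^2(T^\ast M)).
\]
I would prove this adjoint form, which lives on $2$-tensors where the structural identities of the soliton are most natural, and then recover the stated identity by the duality argument already used for Theorem \ref{divomega}: letting $T$ be the difference of the two sides of the $1$-form identity, one expands $\int_M|T|^2\,dm$, transfers every operator onto $\omega$ through the adjoint relations, and is left with the pairing of $\omega$ against the $2$-tensor identity applied to $T$, which then vanishes.

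The core is therefore the $2$-tensor commutator. Writing $\Delta_{f,L}k=\Delta_f k+2\,Rm(k,-)-\tfrac1\tau k$, it reduces to
\[
div_f\big(\Delta_f k\big)-\Delta_f\big(div_f k\big)+2\,div_f\big(Rm(k,-)\big)=\frac{1}{2\tau}\,div_f k.
\]
I would expand both sides in a local frame and commute the single derivative of $div_f$ past the two derivatives of $\Delta_f$ using the Ricci commutation identities recorded in Section 2; this produces terms of schematic type $Rm\ast\nabla k$ and $(\nabla Rm)\ast k$, together with Hessian-of-$f$ terms coming from the drift $-\nabla_{\nabla f}$. The identities of Theorem \ref{us} then carry out the cancellation: $div_f Rm=0$ (that is, $\nabla_p R_{pjkl}=R_{pjkl}\nabla^p f$) absorbs the $\nabla Rm$ contractions, $div_f Ric=0$ and the contracted second Bianchi identity control the Ricci contractions, and the soliton equation $\nabla^2 f=\tfrac{1}{2\tau}g-Ric$ turns the surviving drift terms into the single coefficient $\tfrac{1}{2\tau}$.

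The main obstacle is organizational rather than conceptual: it is the bookkeeping of the various curvature and $\nabla$-curvature terms generated by commuting three covariant derivatives on a $2$-tensor, and checking that after $div_f Rm=0$, $div_f Ric=0$ and the soliton equation are applied they collapse to exactly $\tfrac{1}{2\tau}\,div_f k$ with no residual curvature term. A convenient consistency check is that applying $div_f$ once more to the sought identity must reproduce the relation $div_f div_f\Delta_{f,L}k=\Delta_f div_f div_f k$ anticipated in the discussion preceding Theorem \ref{divda}, which indeed follows by feeding the $1$-form $div_f k$ into Theorem \ref{divomega}.
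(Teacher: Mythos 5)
Your proposal is correct, but it runs the paper's logic in the opposite direction. The paper proves this theorem by a direct local-coordinate computation at the level of the $1$-form: it expands $\Delta(\mathcal{L}_{\#\omega}g)$ and $\nabla_{\nabla f}(\mathcal{L}_{\#\omega}g)$, commutes the three covariant derivatives of $\omega$, and lets $div_{f}Rm=0$ together with the soliton equation absorb the curvature and Hessian-of-$f$ terms; the restatement via $div_f^\dag\omega=-\tfrac12\mathcal{L}_{\#\omega}g$ is then Corollary \ref{divshoplich}, and the $2$-tensor identity $\Delta_{f}div_{f}h=div_{f}\Delta_{f,L}h+\tfrac{1}{2\tau}div_{f}h$ (Theorem \ref{divlich}) is only afterwards deduced from it by exactly the $\int_M|T|^2\,dm=0$ duality trick you describe. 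You instead take that $2$-tensor commutator as the primary object, prove it by brute force, and dualize back to recover the present statement. Both routes are sound and consume the same inputs (the Ricci commutation rules, $div_{f}Rm=0$, $div_{f}Ric=0$, the contracted second Bianchi identity, and the soliton equation), and the duality step is symmetric, so there is no circularity --- provided you really do prove the $2$-tensor identity from scratch rather than quoting Theorem \ref{divlich}, which in this paper sits downstream of the statement you are proving. The trade-off is that the paper's computation commutes derivatives only on a $1$-form, which is slightly lighter bookkeeping, whereas yours works on a symmetric $2$-tensor but directly yields the identity that is actually used later (Theorems \ref{divdivlich} and \ref{shopdivlich}) and that appears in Deruelle's work; your consistency check via Theorem \ref{divomega} is precisely how the paper then proves Theorem \ref{divdivlich}. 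The one caveat is that the core cancellation is left schematic in your write-up; it does go through, but it is the entire content of the proof on either route.
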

    
  \begin{proof}
   	
   	\begin{eqnarray}
	{\Delta _f}({\mathcal{L}_{\# \omega }}g) & =& \Delta ({\mathcal{L}_{\# \omega }}g) - \mathop \nabla \nolimits_{\nabla f} {\mathcal{L}_{\# \omega }}g \nonumber
\end{eqnarray}
	Now for the first term of the right hand side we have
\begin{eqnarray}
			{\Delta }({\mathcal{L}_{\# \omega }}g)_{ij} & =& {g^{pq}} \big[{\nabla _p}{\nabla _q}({\nabla _i}{\omega _j} + {\nabla _j}{\omega _i})  \big] \nonumber\\
	& = & {g^{pq}} \big[ {\nabla _p}({\nabla _i}{\nabla _q}{\omega _j} + {R_{qijs}}{\omega ^s} + {\nabla _j}{\nabla _q}{\omega _i}+ {R_{qjis}}{\omega ^s}) \nonumber \\
		& = & {g^{pq} } \big[ ({{\nabla _p}{\nabla _i}{\nabla _q}{\omega _j} + {\nabla _p}{\nabla _j}{\nabla _q}{\omega _i}}) \nonumber \\ & + &  ({\nabla _p}{R_{qijs}} {\omega ^s} + {\nabla _p}{R_{qjis}} {\omega ^s} )+    ( {{R_{qijs}} + {R_{qjis}}} ){\nabla _p}{\omega ^s}  \big] \nonumber\\
	& = &{g^{pq}}\big[  ( {\nabla _i}{\nabla _p}{\nabla _q}{\omega _j} + {g^{\alpha \beta }}({R_{piq\alpha }}{\nabla _\beta }{\omega _j} + {R_{pij\alpha }}{\nabla _q}{\omega _\beta }))\nonumber \\ &+&  ({\nabla _j}{\nabla _p}{\nabla _q}{\omega _i}+ {g^{\alpha \beta }}({R_{pjq\alpha }}{\nabla _\beta }{\omega _i} + {R_{pji\alpha }}{\nabla _q}{\omega _\beta }) )\nonumber\\ &+&  ({\nabla _p}{R_{qijs}} {\omega ^s} + {\nabla _p}{R_{qjis}} {\omega ^s} )+ \left( {{R_{qijs}} + {R_{qjis}}} \right){\nabla _p}{\omega ^s}\big]  \nonumber\\
	& = &{g^{pq}}\big[ ({\nabla _i}{\nabla _p}{\nabla _q}{\omega _j} + {\nabla _j}{\nabla _p}{\nabla _q}{\omega _i}) + ({\nabla _p}{R_{qijs}} {\omega ^s} + {\nabla _p}{R_{qjis}} {\omega ^s} )\nonumber\\ &-&2 ({R_{qisj}} + {R_{qjsi}}){\nabla _p}{\omega ^s} \big]+ {g^{\alpha \beta }}({R_{i\alpha }} {\nabla _\beta }{\omega _j} + {R_{j\alpha }} {\nabla _\beta }{\omega _i})\nonumber \\
	&=& ({\nabla _i}\Delta {\omega _j} + {\nabla _j}\Delta {\omega _i})
	+g^{pq} ({\nabla _p}{R_{qijs}} {\omega ^s} + {\nabla _p}{R_{qjis}} {\omega ^s} )\nonumber\\&-&{2g^{pq}}({R_{qisj}} + {R_{qjsi}}){\nabla _p}{\omega ^s} + {g^{\alpha \beta }}({R_{i\alpha }} {\nabla _\beta }{\omega _j} + {R_{j\alpha }} {\nabla _\beta }{\omega _i}) \nonumber
\end{eqnarray}
And for the second term we have
\begin{eqnarray}
    ( \nabla_{\nabla f} {\mathcal{L}_{\# \omega }}g)_{ij}
	 & =&  {g^{pq}} \big[ {\nabla _p}({\nabla _i}{\omega _j} + {\nabla _j}{\omega _i}){\nabla _q}f \big] \nonumber\\
	&=&g^{pq}\big[({\nabla _i}{\nabla _p}{\omega _j} + {R_{pijs}}{\omega ^s} + {\nabla _j}{\nabla _p}{\omega _i} + {R_{pjis}}{\omega ^s}){\nabla _q}f \big] \nonumber \\
	& = & {g^{pq} } \big[ ( {R_{pijs}}+ {R_{pjis}}){{\nabla _q}f} {\omega ^s} +     ( {{\nabla _i}{\nabla _p}{\omega _j} + {\nabla _j}{\nabla _p}{\omega _i}} ){\nabla _q}f \big] \nonumber\\
	 &=&g^{pq} \big[ \left( {{R_{qijs}} + {R_{qjis}}} \right){\nabla _p}f{\omega ^s} + \left( {{\nabla _i}({\nabla _p}{\omega _j}{\nabla _q}f) - {\nabla _i}{\nabla _q}f{\nabla _p}{\omega _j}} \right)\nonumber\\ &+& \left( {{\nabla _j}({\nabla _p}{\omega _i}{\nabla _q}f) - {\nabla _j}{\nabla _q}f{\nabla _p}{\omega _i}} \right) \big]\nonumber\\
	&=&{g^{pq}}\big[ ({R_{qijs}} + {R_{qjis}}){\nabla _p}f{\omega ^s} + ({\nabla _i}({\nabla _p}{\omega _j}{\nabla _q}f) + {\nabla _j}({\nabla _p}{\omega _i}{\nabla _q}f))\nonumber\\ &-& ({\nabla _i}{\nabla _q}f{\nabla _p}{\omega _j} + {\nabla _j}{\nabla _q}f{\nabla _p}{\omega _i}) \big]\nonumber
\end{eqnarray}
Note that
$$ {\nabla _i}{\nabla _p}{\omega _j} {\nabla _q}f=
	\left( {{\nabla _i}({\nabla _p}{\omega _j}{\nabla _q}f) - {\nabla _i}{\nabla _p}f{\nabla _q}{\omega _j}} \right)$$
Now   if we   substitute these   two  calculated terms and considering

\begin{eqnarray}
 	{\Delta _f}({\mathcal{L}_{\# \omega }}g)_{ij}&=& ({\nabla _i}\Delta {\omega _j} + {\nabla _j}\Delta {\omega _i}) - ({\nabla _i}({\nabla _{\nabla f}}\omega ) + {\nabla _j}({\nabla _{\nabla f}}\omega ))\nonumber \\& +& \big[ ({g^{\alpha \beta }}({R_{i\alpha }} + {\nabla _i}{\nabla _\alpha }f){\nabla _\beta }{\omega _j} + {g^{\alpha \beta }}({R_{j\alpha }} + {\nabla _j}{\nabla _\alpha }f){\nabla _\beta }{\omega _i})\big]\nonumber\\& -& 2{g^{pq}}({R_{qisj}} + {R_{siqj}}){\nabla _p}{\omega ^s} +g^{pq}(\nabla_{p}R_{qijs}-R_{pijs}\nabla_{q}f)\nabla_{p}\omega^{s} \nonumber\\
 	&+&g^{pq}(\nabla_{p}R_{qjis}-R_{pjis}\nabla_{q}f)\nabla_{p}\omega^{s}\nonumber
 \end{eqnarray}
Now according to Theorem(\ref{us}) we have $div_{f}Rm=0$ therefore 
\begin{eqnarray}
	{\Delta _f}({\mathcal{L}_{\# \omega }}g)_{ij}&=&({\nabla _i}\Delta {\omega _j} + {\nabla _j}\Delta {\omega _i}) - ({\nabla _i}({\nabla _{\nabla f}}\omega )_{j} + {\nabla _j}({\nabla _{\nabla f}}\omega )_{i})\nonumber \\& +& \big[ ({g^{\alpha \beta }}({R_{i\alpha }} + {\nabla _i}{\nabla _\alpha }f){\nabla _\beta }{\omega _j} + {g^{\alpha \beta }}({R_{j\alpha }} + {\nabla _j}{\nabla _\alpha }f){\nabla _\beta }{\omega _i})\big]\nonumber\\& -& 2{g^{pq}}({R_{qisj}} + {R_{siqj}}){\nabla _p}{\omega ^s}\nonumber
\end{eqnarray}
On the other hand we have
\begin{eqnarray}
		2{g^{pq}}({R_{qisj}} + {R_{siqj}}){\nabla _p}{\omega ^s}&=&2{g^{pq}}{g^{rs}}(R_{qisj}+R_{siqj})\nabla_{p}\omega_{r}\nonumber\\
		&=&2{g^{pq}}{g^{rs}}R_{qisj}(\nabla_{p}\omega_{r}+\nabla_{r}\omega_{p})\nonumber\\
		&=&2Rm({\mathcal{L}_{\# \omega }}g,-)_{ij}\nonumber
\end{eqnarray}
and finally
\begin{eqnarray}
	{\Delta _f}({\mathcal{L}_{\# \omega }}g)&=&{\mathcal{L}_{\# \Delta \omega }}g -{\mathcal{L}_{\# {\nabla _{\nabla f}}\omega }}g + \frac{1}{{2\tau }}{\mathcal{L}_{\# \omega }}g - 2Rm({\mathcal{L}_{\# \omega }}g, - ) \nonumber\\
	&=& {\mathcal{L}_{\# {\Delta _f}\omega }}g + \frac{1}{{2\tau }}{\mathcal{L}_{\# \omega }}g - 2Rm({\mathcal{L}_{\# \omega }}g, - ) \nonumber
\end{eqnarray}
Therefore we conclude that

\begin{eqnarray}
\big({\Delta _f}({\mathcal{L}_{\# \omega }}g) + 2Rm({\mathcal{L}_{\# \omega }}g, - ) - \frac{1}{\tau }{\mathcal{L}_{\# \omega }}g \big)+ \frac{1}{2\tau }{\mathcal{L}_{\# \omega }}g  &=& {\mathcal{L}_{\# {\Delta _f}\omega }}g \nonumber
\end{eqnarray}
 And finally it follows that
$${\Delta _{f,L}}({\mathcal{L}_{\# \omega }}g)+ \frac{1}{2\tau }{\mathcal{L}_{\# \omega }}g  = {\mathcal{L}_{\# ({\Delta _f}\omega )}}g $$
\end{proof}

\begin{corollary}\label{divshoplich}
	For a closed orientable $GRS^+$ and  $\omega\in\Omega^{1}(M)$we have
	\begin{eqnarray}
		div^\dag_{f}\Delta_{f}\omega=\Delta_{f,L}div^\dag_{f}\omega+\frac{1}{2\tau}div^\dag_{f}\omega
	\end{eqnarray}
\end{corollary}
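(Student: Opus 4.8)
The plan is to recognize Corollary \ref{divshoplich} as a direct algebraic restatement of Theorem \ref{lielap}, with the only real content being the identification of $div_f^\dag$ with the Lie-derivative operator via the defining relation $div_f^\dag\omega = -\frac{1}{2}\mathcal{L}_{\#\omega}g$ recorded in the conventions of Section 2. Once this is in hand, the entire corollary follows by substitution and rescaling, so I do not expect to need any new geometric input beyond what Theorem \ref{lielap} already supplies.

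Concretely, first I would rewrite every occurrence of the Lie derivative in Theorem \ref{lielap} in terms of $div_f^\dag$. Applying the definition both to $\omega$ and to the $1$-form $\Delta_f\omega$ gives $\mathcal{L}_{\#\omega}g = -2\,div_f^\dag\omega$ and $\mathcal{L}_{\#(\Delta_f\omega)}g = -2\,div_f^\dag\Delta_f\omega$. Substituting these into the statement of Theorem \ref{lielap} and using the linearity of $\Delta_{f,L}$ (so that $\Delta_{f,L}(-2\,div_f^\dag\omega) = -2\,\Delta_{f,L}(div_f^\dag\omega)$) yields
\begin{eqnarray}
-2\,\Delta_{f,L}(div_f^\dag\omega) - \frac{1}{\tau}\,div_f^\dag\omega &=& -2\,div_f^\dag\Delta_f\omega. \nonumber
\end{eqnarray}

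Finally I would divide this identity through by $-2$, which converts the coefficient $-\frac{1}{\tau}$ on the lower-order term into $+\frac{1}{2\tau}$ and produces exactly
\begin{eqnarray}
div_f^\dag\Delta_f\omega &=& \Delta_{f,L}\,div_f^\dag\omega + \frac{1}{2\tau}\,div_f^\dag\omega, \nonumber
\end{eqnarray}
as claimed. There is essentially no obstacle here: the substantive commutator computation has already been carried out in the proof of Theorem \ref{lielap}, and the only point demanding care is bookkeeping of the factor $-2$ from the definition of $div_f^\dag$, which is precisely what turns the coefficient $\frac{1}{2\tau}$ on $\mathcal{L}_{\#\omega}g$ into the same coefficient $\frac{1}{2\tau}$ on $div_f^\dag\omega$ rather than into $\frac{1}{\tau}$.
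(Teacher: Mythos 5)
Your proposal is correct and is precisely the argument the paper intends: the corollary is stated without proof as an immediate consequence of Theorem \ref{lielap}, obtained by substituting $\mathcal{L}_{\#\omega}g=-2\,div_f^\dag\omega$ (and likewise for $\Delta_f\omega$) and dividing by $-2$. Your bookkeeping of the factor $-2$, turning $\frac{1}{2\tau}\mathcal{L}_{\#\omega}g$ into $\frac{1}{2\tau}div_f^\dag\omega$, is accurate.
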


\begin{theorem}\label{divlich}
	For a closed orientable $GRS^+$ $(M^{n}, g,f,\tau)$ and  $h \in {C^\infty }({S^2}({T^*}M))$  we have
	\begin{eqnarray}
	\Delta_{f}div_{f}h=div_{f}\Delta_{f,L}h+\frac{1}{2\tau}div_{f}h
	\end{eqnarray}
\end{theorem}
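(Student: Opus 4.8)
The plan is to follow the duality strategy already used in the proof of Theorem \ref{divomega}, the key observation being that the identity to be established is nothing but the formal adjoint, with respect to $\langle -,-\rangle_{dm}$, of Corollary \ref{divshoplich}. To see this at the level of operators, note that $\Delta_f\,div_f$ sends a symmetric two-tensor to a one-form, while $div_f^\dag\,\Delta_f$ sends a one-form to a two-tensor; since $\Delta_f$ is self-adjoint and $(div_f)^\dag=div_f^\dag$, these two operators are formal adjoints. Similarly $div_f\,\Delta_{f,L}+\frac{1}{2\tau}div_f$ is the formal adjoint of $\Delta_{f,L}\,div_f^\dag+\frac{1}{2\tau}div_f^\dag$, using that $\Delta_{f,L}$ is self-adjoint on two-tensors. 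Hence Corollary \ref{divshoplich} and the present statement are adjoint to one another, and the latter must follow from the former.

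To carry this out concretely and self-containedly I would set $\alpha:=\Delta_f\,div_f h-div_f\,\Delta_{f,L}h-\frac{1}{2\tau}div_f h$, a smooth one-form, and evaluate $\int_M|\alpha|^2\,dm=\int_M\langle\alpha,\alpha\rangle\,dm$ by pairing each of the three terms of $\alpha$ against the test one-form $\alpha$ and transferring every operator onto the second slot. Using self-adjointness of $\Delta_f$ on one-forms for the first term, the adjoint relation $\int_M\langle div_f h,\cdot\rangle\,dm=\int_M\langle h,div_f^\dag\,\cdot\rangle\,dm$ together with self-adjointness of $\Delta_{f,L}$ for the second term, and the same adjoint relation for the third, this rewrites the integrand as $\langle h,\;div_f^\dag\Delta_f\alpha-\Delta_{f,L}div_f^\dag\alpha-\frac{1}{2\tau}div_f^\dag\alpha\rangle$.

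At this stage Corollary \ref{divshoplich}, applied to the one-form $\alpha$, asserts precisely that $div_f^\dag\Delta_f\alpha-\Delta_{f,L}div_f^\dag\alpha-\frac{1}{2\tau}div_f^\dag\alpha=0$. Therefore $\int_M|\alpha|^2\,dm=0$, and since $M$ is closed so that $dm$ is a positive measure, $\alpha\equiv 0$, which is the asserted commutator formula.

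The computation itself is routine; the only place demanding care is the bookkeeping of the three adjoint transfers --- remembering that passing $div_f$ to its adjoint raises tensor degree to $div_f^\dag$, and that it is $\Delta_{f,L}$, not $\Delta_f$, which is self-adjoint on the two-tensor in the middle term. All of the genuine geometric content (the Bianchi-type identity $div_f Rm=0$ and the soliton equation) has already been consumed in establishing Theorem \ref{lielap} and hence Corollary \ref{divshoplich}; no further curvature manipulation is needed, and the present result is obtained from them by pure $L^2(dm)$-duality on the closed manifold.
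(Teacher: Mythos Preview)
Your proposal is correct and is essentially identical to the paper's own proof: the paper also sets $\omega=\Delta_f\,div_f h-div_f\,\Delta_{f,L}h-\frac{1}{2\tau}div_f h$, moves each operator to the other slot via the same adjunctions you list, and invokes Corollary~\ref{divshoplich} to conclude $\int_M|\omega|^2\,dm=0$. Your write-up spells out the adjoint bookkeeping more explicitly, but the argument is the same.
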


\begin{proof}
Take $\omega=\Delta_{f}div_{f}h-div_{f}\Delta_{f,L}h-\frac{1}{2\tau}div_{f}h$,  now we have 

	\begin{eqnarray}
	 \ \ \int_{M} |\omega|^2 \ dm&=&\int_{M} <\Delta_{f}div_{f}h-div_{f}\Delta_{f,L}h-\frac{1}{2\tau}div_{f}h
	,\omega>dm\nonumber\\
	&=&\int_{M}<h,div_{f}^\dag\Delta_{f}\omega-\Delta_{f,L}div_{f}^\dag \omega-\frac{1}{2\tau}div_{f}^\dag\omega> dm\nonumber\\
	&=&0\nonumber
\end{eqnarray}
Now $\displaystyle\int_{M}|\omega|^2 \ dm=0$,  therefore  because $M$ is compact it follows that  $\omega=0$.

\end{proof}

  \begin{theorem}\label{divdivlich}
    For a closed orientable $GRS^+$ $(M^{n}, g,f,\tau)$ and  $h \in {C^\infty }({S^2}({T^*}M))$ we have  	
	\begin{eqnarray}
		\Delta_{f}div_{f}div_{f}h=div_{f}div_{f}\Delta_{f,L}h
	\end{eqnarray}
	\end{theorem}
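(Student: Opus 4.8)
The plan is to obtain this identity formally by composing the two commutator formulas already proved, without any new curvature computation. The key observation I expect to exploit is that Theorem \ref{divomega} and Theorem \ref{divlich} each produce a zeroth-order shift equal to $\tfrac{1}{2\tau}div_{f}div_{f}h$, and that after composing them these two shifts sit on opposite sides of the resulting equation and cancel, leaving the clean formula with no lower-order term.

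First I would apply Theorem \ref{divlich} to the symmetric two-tensor $h$, giving $\Delta_{f}div_{f}h=div_{f}\Delta_{f,L}h+\tfrac{1}{2\tau}div_{f}h$, and then apply the divergence $div_{f}\colon\Omega^{1}(M)\to C^{\infty}(M)$ to both sides to get $div_{f}\Delta_{f}div_{f}h=div_{f}div_{f}\Delta_{f,L}h+\tfrac{1}{2\tau}div_{f}div_{f}h$. Next I would regard $\omega=div_{f}h$ as a one-form and invoke Theorem \ref{divomega}, which rewrites the left-hand side as $div_{f}\Delta_{f}(div_{f}h)=\Delta_{f}div_{f}div_{f}h+\tfrac{1}{2\tau}div_{f}div_{f}h$. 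Equating the two expressions and cancelling the common term $\tfrac{1}{2\tau}div_{f}div_{f}h$ yields precisely $\Delta_{f}div_{f}div_{f}h=div_{f}div_{f}\Delta_{f,L}h$.

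I do not expect a genuine obstacle here, since the substantive work is already contained in Theorems \ref{divomega} and \ref{divlich}; the only real care required is bookkeeping, namely keeping track of whether each occurrence of $div_{f}$ and $\Delta_{f}$ acts on functions, on one-forms, or on symmetric two-tensors, so that the correct formula is invoked at each step, and checking that the two shift terms are literally the same object so that they cancel. As an alternative that matches the $\int_{M}|\cdot|^{2}dm=0$ template used for Theorems \ref{divomega} and \ref{divlich}, I could instead set $a=\Delta_{f}div_{f}div_{f}h-div_{f}div_{f}\Delta_{f,L}h$, expand $\int_{M}a^{2}dm$, move every operator onto its weighted adjoint (using $d^{\dag}=-div_{f}$, $(div_{f})^{\dag}=div_{f}^{\dag}$ and the self-adjointness of $\Delta_{f}$ and $\Delta_{f,L}$), and reduce the vanishing of the integral to the dual identity $div_{f}^{\dag}d\,\Delta_{f}=\Delta_{f,L}\,div_{f}^{\dag}d$ on functions; this last identity in turn follows by composing Theorem \ref{divda} with Corollary \ref{divshoplich}, where once again the two $\tfrac{1}{2\tau}$ shifts cancel.
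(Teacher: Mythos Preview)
Your primary argument is correct and is essentially identical to the paper's proof: apply $div_{f}$ to the identity of Theorem \ref{divlich}, then rewrite the left-hand side via Theorem \ref{divomega} with $\omega=div_{f}h$, and cancel the common $\tfrac{1}{2\tau}div_{f}div_{f}h$ term. The adjoint-based alternative you sketch is not needed, but it is also valid.
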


\begin{proof}

  By Theorem(\ref{divlich}) it follows that

 \begin{eqnarray}
	\Delta_{f}div_{f}h&=&div_{f}\Delta_{f,L}h+\frac{1}{2\tau}div_{f}h\nonumber\\
	&\ \ & and  \nonumber\\
	div_{f}\Delta_{f}div_{f}h&=&div_{f}div_{f}\Delta_{f,L}h+\frac{1}{2\tau}div_{f}div_{f}h\nonumber
  \end{eqnarray}
  On the other hand because $div_{f}h$ is a differential form therefore according to Theorem(\ref{divomega}) if we take $\omega=div_{f}h$ we have
  \begin{eqnarray}
  		div_{f}\Delta_{f}div_{f}h&=\Delta_{f}div_{f}div_{f}h+\frac{1}{2\tau}div_{f}div_{f}h\nonumber
  \end{eqnarray} 
 and finally $\Delta_{f}div_{f}div_{f}h=div_{f}div_{f}\Delta_{f,L}h$
 \end{proof}

 \newtheorem{shopldivlich}{Corollary}[section]
 \begin{theorem}\label{shopdivlich}
	For a closed orientable $GRS^+$ $(M^{n}, g,f,\tau)$ and  $h \in {C^\infty }({S^2}({T^*}M))$
	\begin{eqnarray}
	div^\dag_{f}div_{f}\Delta_{f,L}h=	\Delta_{f,L}div^\dag_{f}div_{f}
	\end{eqnarray}
  \end{theorem}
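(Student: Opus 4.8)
The plan is to reduce the identity entirely to commutator formulas already proved at the level of $1$-forms, exploiting that $div_{f}h$ is itself a $1$-form. So I would begin by setting $\omega = div_{f}h \in \Omega^{1}(M)$; the target then reads $div_{f}^\dag div_{f}\Delta_{f,L}h = \Delta_{f,L}div_{f}^\dag\omega$, and the whole task becomes rewriting the left-hand side using the known behaviour of $\Delta_{f}$ and $\Delta_{f,L}$ on $1$-forms.

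The first ingredient is Theorem \ref{divlich}, namely $\Delta_{f}div_{f}h = div_{f}\Delta_{f,L}h + \frac{1}{2\tau}div_{f}h$. Reading this with $\omega = div_{f}h$ gives $div_{f}\Delta_{f,L}h = \Delta_{f}\omega - \frac{1}{2\tau}\omega$. Applying $div_{f}^\dag$ to both sides then yields $div_{f}^\dag div_{f}\Delta_{f,L}h = div_{f}^\dag\Delta_{f}\omega - \frac{1}{2\tau}div_{f}^\dag\omega$.

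The second ingredient is Corollary \ref{divshoplich}, the $1$-form commutator $div_{f}^\dag\Delta_{f}\omega = \Delta_{f,L}div_{f}^\dag\omega + \frac{1}{2\tau}div_{f}^\dag\omega$. Substituting this into the previous relation, the two occurrences of $\frac{1}{2\tau}div_{f}^\dag\omega$ cancel, leaving $div_{f}^\dag div_{f}\Delta_{f,L}h = \Delta_{f,L}div_{f}^\dag\omega = \Delta_{f,L}div_{f}^\dag div_{f}h$, which is the assertion.

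I do not expect a genuine analytic obstacle here: the proof is a formal chaining of two previously established commutators, each of which carries a zeroth-order $\frac{1}{2\tau}$ correction. The only thing to watch carefully is the bookkeeping of these correction terms, which must cancel rather than accumulate, so that no residual $\frac{1}{2\tau}$ multiple survives. The single conceptual point is the reduction itself—recognizing that $div_{f}h$ should be treated as the $1$-form $\omega$ so that Theorem \ref{divlich} and Corollary \ref{divshoplich} become applicable; after that the computation is entirely mechanical.
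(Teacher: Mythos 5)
Your proposal is correct and follows exactly the paper's own proof: both rewrite $div_{f}\Delta_{f,L}h$ via Theorem \ref{divlich} as $\Delta_{f}div_{f}h-\frac{1}{2\tau}div_{f}h$, apply $div_{f}^\dag$, and then invoke Corollary \ref{divshoplich} with $\omega=div_{f}h$ so the two $\frac{1}{2\tau}div_{f}^\dag div_{f}h$ terms cancel. No difference in substance or route.
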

  \begin{proof}
  	
  	From Theorem(\ref{divlich}) we have
  \begin{eqnarray}
	div^\dag_{f}div_{f}\Delta_{f,L}h&=&div^\dag_{f}(\Delta_{f}div_{f}h-\frac{1}{2\tau}div_{f}h)\nonumber\\
	&=&div^\dag_{f}\Delta_{f}div_{f}h-\frac{1}{2\tau}	div^\dag_{f}div_{f}h\nonumber
\end{eqnarray}
		Now  from   Corollary(\ref{divshoplich})  we   have 
\begin{eqnarray}
	div^\dag_{f}div_{f}\Delta_{f,L}h&=&\Delta_{f,L}	div^\dag_{f}div_{f}h+\frac{1}{2\tau}	div^\dag_{f}div_{f}h-\frac{1}{2\tau}	div^\dag_{f}div_{f}h\nonumber\\
&=&\Delta_{f,L}	div^\dag_{f}div_{f}h\nonumber
\end{eqnarray}
\end{proof}

\section{Kernel of Stability Operator}
 We know that $\nu$-entropy is invariant with respect to action of  diffeomorfism group Diff(M), now  because tangent vector to action of diffeomorphism group on metrics is  ${L_{\# \omega }}g =-2div_f^\dag\omega , \omega \in \omega(M)$, therefore the second variation of the $\nu$-entropy on $Im (div_f^\dag )$   is zero $$<N(div^\dag_{f}\omega),div^\dag_{f}\omega>dm= \displaystyle\int_{M}<N({\mathcal{L}_{X }}g),{\mathcal{L}_{X }}g>  dm =0$$  But is this true that for $h\in Im(div^\dag_{f})$,  $N(h)=0$? 
Cao and He in \cite{sym} proved that for trivial $GRS^+$ (Einstein metric with positive scalar curvature) $Im(div^\dag_{f})\subset Ker N$.  In this section we extend this result  to nontrivial
$GRS^+$.

For computation of the stability operator on  $Im(div^\dag_{f})$ we need to compuate $\upsilon_{h},div_{f}div^\dag_{f}$ on  $Im(div^\dag_{f})$.
For this purpose  we need to prove some theorems.

     \begin{theorem}\label{divlie}
  	For a closed orientable $GRS^+$ $(M^{n}, g,f,\tau)$ we have 
  	 \begin{eqnarray}
  			di{v_f}({\mathcal{L}_{\# \omega }}g)={\Delta _f}\omega  + d(di{v_f}\omega ) + \frac{1}{{2\tau }}{\omega}
  	 \end{eqnarray}
       \end{theorem}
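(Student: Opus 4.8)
The plan is to prove this identity by a direct pointwise computation, comparing the two sides coefficient by coefficient. Unlike Theorems \ref{divomega} and \ref{divlich}, here no global $L^2$/self-adjointness trick is needed, since both sides are expressions that are first order in $\omega$ and the curvature and drift terms can be matched explicitly. Writing $h={\mathcal{L}_{\#\omega}}g$, so that $h_{ij}=\nabla_i\omega_j+\nabla_j\omega_i$, I would start from the definition $(di{v_f}h)_i=g^{pq}(\nabla_p h_{qi}-h_{pi}\nabla_q f)$ and split it into an unweighted divergence part $g^{pq}\nabla_p h_{qi}$ and a drift part $-g^{pq}h_{pi}\nabla_q f$, treating each separately.

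For the unweighted part, the summand $g^{pq}\nabla_p\nabla_q\omega_i$ is simply the rough Laplacian $\Delta\omega_i$, while in $g^{pq}\nabla_p\nabla_i\omega_q$ I would commute $\nabla_p$ and $\nabla_i$ using the paper's curvature commutator $\nabla_p\nabla_i\omega_q=\nabla_i\nabla_p\omega_q+R_{piqs}\omega^s$. Contracting with $g^{pq}$ and using the convention $g^{pq}R_{piqs}=R_{is}$ (the same Ricci contraction that appeared in the proof of Theorem \ref{lielap}) gives $g^{pq}\nabla_p\nabla_i\omega_q=\nabla_i(div\,\omega)+R_{is}\omega^s$. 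Hence the unweighted part equals $\Delta\omega_i+\nabla_i(div\,\omega)+R_{is}\omega^s$.

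For the drift part I would write $-g^{pq}h_{pi}\nabla_q f=-\nabla_{\nabla f}\omega_i-g^{pq}(\nabla_i\omega_p)\nabla_q f$, and rewrite the last summand as $g^{pq}(\nabla_i\omega_p)\nabla_q f=\nabla_i\langle\omega,\nabla f\rangle-\omega^s\nabla_i\nabla_s f$. This is the step where the soliton structure enters: by the defining equation \eqref{sol} one has $\nabla_i\nabla_s f=\tfrac{1}{2\tau}g_{is}-R_{is}$, so $\omega^s\nabla_i\nabla_s f=\tfrac{1}{2\tau}\omega_i-R_{is}\omega^s$. Substituting back, the drift part becomes $-\nabla_{\nabla f}\omega_i-\nabla_i\langle\omega,\nabla f\rangle+\tfrac{1}{2\tau}\omega_i-R_{is}\omega^s$.

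Adding the two contributions, the two copies of $R_{is}\omega^s$ cancel, and I am left with
\begin{eqnarray}
(di{v_f}h)_i=\big(\Delta\omega_i-\nabla_{\nabla f}\omega_i\big)+\nabla_i\big(div\,\omega-\langle\omega,\nabla f\rangle\big)+\tfrac{1}{2\tau}\omega_i. \nonumber
\end{eqnarray}
The first bracket is $(\Delta_f\omega)_i$ by the definition $\Delta_f=\Delta-\nabla_{\nabla f}$, and the second is $\nabla_i(div_f\omega)=(d\,div_f\omega)_i$ since $div_f\omega=div\,\omega-\langle\omega,\nabla f\rangle$, which yields exactly $di{v_f}({\mathcal{L}_{\#\omega}}g)=\Delta_f\omega+d(div_f\omega)+\tfrac{1}{2\tau}\omega$. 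The only delicate point in the whole argument is the bookkeeping of the curvature sign conventions and the Ricci contraction together with the correct invocation of the soliton equation; once these are handled, the cancellation of the two Ricci terms is automatic and the remaining pieces reassemble into the three stated terms.
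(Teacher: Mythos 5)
Your proof is correct and follows essentially the same route as the paper: a direct pointwise computation splitting $div_f(\mathcal{L}_{\#\omega}g)$ into the divergence and drift parts, commuting derivatives to produce a Ricci term, moving a derivative off $\nabla f$ to produce a Hessian of $f$, and invoking the soliton equation. The only cosmetic difference is that the paper groups $R_{pi}+\nabla_p\nabla_i f$ into $\frac{1}{2\tau}g_{pi}$ directly, whereas you substitute $\nabla_i\nabla_s f=\frac{1}{2\tau}g_{is}-R_{is}$ and cancel the two Ricci terms; this is the same algebra in a different order.
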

   
\begin{proof}

	Because $	di{v_f}({\mathcal{L}_{\# \omega }}g)_{i} = div({\mathcal{L}_{\# \omega }}g) - {\mathcal{L}_{\# \omega }}g(\nabla f, - ) $ therefore
  	\begin{eqnarray}
  		di{v_f}({\mathcal{L}_{\# \omega }}g)_{i} &=& {g^{pq}}[{\nabla _p}({\nabla _q}{\omega _i} + {\nabla _i}{\omega _q}) - ({\nabla _p}{\omega _i} + {\nabla _i}{\omega _p}){\nabla _q}f]\nonumber \\
  		&=& {g^{pq}}[{\nabla _p}{\nabla _q}{\omega _i} + {\nabla _p}{\nabla _i}{\omega _q}  - {\nabla _p}{\omega _i}{\nabla _q}f - {\nabla _i}{\omega _p}{\nabla _q}f] \nonumber 
  	\end{eqnarray}
  we have $${\nabla _p}{\nabla _i}{\omega _q} = {\nabla _i}{\nabla _p}{\omega _q} + {R_{piqs}}{\omega ^s}$$ Therefore
 
  		$$di{v_f}({L_{\# \omega }}g)_{i}= {g^{pq}}[{\nabla _p}{\nabla _q}{\omega _i} + {\nabla _i}{\nabla _p}{\omega _q} + {R_{piqs}}{\omega ^s} - {\nabla _p}{\omega _i}{\nabla _q}f - {\nabla _i}{\omega _p}{\nabla _q}f] $$
 Now since  			  		 $$ {\nabla _i}{\omega _p}{\nabla _q}f= {\nabla _i}({\nabla _q}f{\omega _p}) - {\nabla _i}{\nabla_q}f{\omega _p} $$ therefore
  		 	\begin{eqnarray}
  			di{v_f}({L_{\# \omega }}g)_{i} &=& {g^{pq}}[{\nabla _p}{\nabla _q}{\omega _i} + {\nabla _i}{\nabla _p}{\omega _q} + {R_{is}}{\omega ^s}- {\nabla _p}{\omega _i}{\nabla _q}f - {\nabla _i}({\omega _p}{\nabla _q}f)\nonumber\\ &+&    {\nabla _i}{\nabla_q}f{\omega _p}]\nonumber \\
  		&=& {g^{pq}}[({\nabla _p}{\nabla _q}{\omega _i} - {\nabla _p}{\omega _i}{\nabla _q}f) + {\nabla _i}({\nabla _p}{\omega _q} - {\omega _p}{\nabla _q}f)+(R_{pi}+{\nabla_{p}\nabla_{i}}f)\omega_q]\nonumber  
  	  	\end{eqnarray}
    And finally we conclude that
  $$di{v_f}({L_{\# \omega }}g)= {\Delta _f}\omega  + d(di{v_f}\omega ) + \frac{1}{{2\tau }}{\omega}$$
  
  \end{proof}

 \begin{corollary}\label{divdivshop}
 		For a closed orientable $GRS^+$ and $\omega\in\Omega^{1}(M)$ we have 
 	\begin{eqnarray}
 		di{v_f}div_f^\dag \omega  =  - \frac{1}{2}({\Delta _f}\omega  + d(di{v_f}\omega ) + \frac{1}{{2\tau }}\omega )
 		\end{eqnarray}
  \end{corollary}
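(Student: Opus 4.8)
The plan is to reduce the statement to a one-line substitution using the definition of the formal adjoint $div_f^\dag$ together with the immediately preceding Theorem \ref{divlie}, so that no fresh curvature computation is required. First I would recall from the notation table in Section 2 that, by definition, $div_f^\dag \omega = -\frac{1}{2}\mathcal{L}_{\# \omega}g$. Since $div_f$ is $\mathbb{R}$-linear, I can pull the constant $-\frac{1}{2}$ through it and write
\begin{eqnarray}
di{v_f}\,div_f^\dag \omega = -\frac{1}{2}\,di{v_f}({\mathcal{L}_{\# \omega }}g).\nonumber
\end{eqnarray}

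Next I would invoke Theorem \ref{divlie}, which computes the right-hand side explicitly as $di{v_f}({\mathcal{L}_{\# \omega }}g) = {\Delta _f}\omega + d(di{v_f}\omega) + \frac{1}{2\tau}\omega$. Substituting this expression into the display above gives
\begin{eqnarray}
di{v_f}\,div_f^\dag \omega = -\frac{1}{2}\Big({\Delta _f}\omega + d(di{v_f}\omega) + \frac{1}{2\tau}\omega\Big),\nonumber
\end{eqnarray}
which is exactly the claimed identity.

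Because the argument is a direct substitution, there is no genuine obstacle to overcome here: all of the actual work — the Bochner-type commutation of covariant derivatives and the crucial use of $div_f Rm = 0$ from Theorem \ref{us} — has already been discharged inside the proof of Theorem \ref{divlie}. The only conceptual point worth stressing is that the factor $-\frac{1}{2}$ is nothing more than the constant built into the definition of the formal adjoint $div_f^\dag$, so this corollary is most naturally read as Theorem \ref{divlie} rewritten in adjoint form, packaging $di{v_f}\,div_f^\dag$ as the operator we shall need when computing $\upsilon_h$ and the stability operator on $Im(div_f^\dag)$ in the next section.
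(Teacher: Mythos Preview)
Your proof is correct and is exactly the intended argument: the paper states this as a corollary with no proof precisely because it is the immediate substitution of $div_f^\dag\omega=-\tfrac{1}{2}\mathcal{L}_{\#\omega}g$ into Theorem~\ref{divlie}. One small correction to your commentary: the proof of Theorem~\ref{divlie} uses only the Ricci commutation formula and the soliton identity $Ric+\nabla^2 f=\tfrac{1}{2\tau}g$, not $div_f Rm=0$ (that identity is invoked in Theorem~\ref{lielap}, not here).
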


\begin{lemma}\label{shopdivshop}
		For a closed orientable $GRS^+$ $(M^{n}, g,f,\tau)$ and $\omega\in\Omega^{1}(M)$ we have 
	
	\begin{eqnarray}
		div_f^\dag di{v_f}(div_f^\dag \omega )& =&\frac{{ - 1}}{2}{\Delta _{f,L}}div_f^\dag \omega  - \frac{{1}}{{2\tau }}div_f^\dag \omega  + \frac{1}{2}{\nabla ^2}di{v_f}\omega \
	\end{eqnarray}
\end{lemma}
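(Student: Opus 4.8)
The plan is to compute the composite $div_f^\dag di{v_f}(div_f^\dag \omega)$ by peeling off one operator at a time, feeding each intermediate result into a commutator identity already established. First I would rewrite the two innermost operators using Corollary \ref{divdivshop}, which gives
$$di{v_f}(div_f^\dag \omega) = -\frac{1}{2}\left({\Delta _f}\omega + d(di{v_f}\omega) + \frac{1}{{2\tau}}\omega\right).$$
Applying the linear operator $div_f^\dag$ to both sides reduces the problem to evaluating $div_f^\dag$ on each of the three summands on the right, after which everything recombines.

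For the first summand I would invoke Corollary \ref{divshoplich}, namely $div_f^\dag {\Delta _f}\omega = {\Delta _{f,L}}div_f^\dag\omega + \frac{1}{2\tau}div_f^\dag\omega$; this is precisely the term that produces the weighted Lichnerowicz Laplacian in the target formula. The third summand is immediate, since $\frac{1}{2\tau}div_f^\dag\omega$ is already proportional to the tensor appearing on the right. The only summand requiring a genuine calculation is $div_f^\dag d(di{v_f}\omega)$: setting $a = di{v_f}\omega$, which is a smooth function, I would apply the definition $(div_f^\dag\eta)_{ij} = -\frac{1}{2}(\nabla_i\eta_j + \nabla_j\eta_i)$ with $\eta = da$, so that $(div_f^\dag da)_{ij} = -\frac{1}{2}(\nabla_i\nabla_j a + \nabla_j\nabla_i a)$. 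Because the Hessian of a function is symmetric, the two terms coincide and this collapses to $-\nabla_i\nabla_j a$, i.e. $div_f^\dag d(di{v_f}\omega) = -{\nabla ^2}(di{v_f}\omega)$; this is the step that generates the Hessian term $\frac{1}{2}{\nabla ^2}di{v_f}\omega$.

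Finally I would assemble the three pieces, multiply through by the overall prefactor $-\frac{1}{2}$, and track the $\tau$-coefficients: the two contributions $-\frac{1}{4\tau}div_f^\dag\omega$ — one arising from the extra term in Corollary \ref{divshoplich} and one from the third summand — add to $-\frac{1}{2\tau}div_f^\dag\omega$, while the $-\frac{1}{2}$ prefactor turns the $+{\nabla ^2}(di{v_f}\omega)$ into $+\frac{1}{2}{\nabla ^2}di{v_f}\omega$. This yields exactly the claimed identity. I expect no real analytic obstacle here: the argument is purely algebraic, and the entire content is the careful bookkeeping of the $\tau$-coefficients together with the single observation that $div_f^\dag\circ d$ applied to a function returns minus its Hessian.
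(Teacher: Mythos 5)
Your proposal is correct and follows essentially the same route as the paper: expand $div_f(div_f^\dag\omega)$ via Corollary \ref{divdivshop}, apply Corollary \ref{divshoplich} to the $\Delta_f\omega$ term, use $div_f^\dag\,d a=-\nabla^2 a$ for the exact-form term, and add the two $-\frac{1}{4\tau}div_f^\dag\omega$ contributions. The bookkeeping of signs and $\tau$-coefficients matches the paper's computation exactly.
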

\begin{proof}
	From Corollary(\ref{divdivshop}) we have
\begin{eqnarray}
	div_f^\dag di{v_f}(div_f^\dag \omega )& =&  - \frac{1}{2}div_f^\dag ({\Delta _f}\omega  + d(di{v_f}\omega ) + \frac{1}{{2\tau }}\omega )\nonumber\\ &=& \frac{{ - 1}}{2}di{v^\dag }{\Delta _f}\omega  + \frac{1}{2}{\nabla ^2}di{v_f}\omega  - \frac{1}{{4\tau }}div_f^\dag \omega  \nonumber 
\end{eqnarray}
 From  Corollary(\ref{divshoplich})  we  have 
 	$$ div^\dag_{f}\Delta_{f}\omega=\Delta_{f,L}div^\dag_{f}\omega+\frac{1}{2\tau}div^\dag_{f}\omega $$
 	 Therefore
 	 \begin{eqnarray}
 	 			div_f^\dag di{v_f}(div_f^\dag \omega )&=& \frac{{ - 1}}{2}{\Delta _{f,L}}div_f^\dag \omega  - \frac{1}{{4\tau }}div_f^\dag \omega  + \frac{1}{2}{\nabla ^2}di{v_f}\omega  - \frac{1}{{4\tau }}div_f^\dag \omega  \nonumber\\
	&= &\frac{{ - 1}}{2}{\Delta _{f,L}}div_f^\dag \omega  - \frac{1}{{2\tau }}div_f^\dag \omega  + \frac{1}{2}{\nabla ^2}di{v_f}\omega \nonumber
\end{eqnarray}
\end{proof}

\begin{lemma}\label{divdivdivshop}
	For a closed orientable $GRS^+$ $(M^{n}, g,f,\tau)$ and $\omega\in\Omega^{1}(M)$ we have
	\begin{eqnarray}
		di{v_f}di{v_f}(div_f^\dag \omega )= -( {\Delta _f}di{v_f}\omega+ \frac{1}{{2\tau }}di{v_f}\omega ) 
	\end{eqnarray}
\end{lemma}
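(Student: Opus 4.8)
The plan is to reduce everything to Corollary~\ref{divdivshop}, which already computes $di{v_f}div_f^\dag\omega$, and then apply $di{v_f}$ one more time. Concretely, Corollary~\ref{divdivshop} gives
$$di{v_f}div_f^\dag\omega = -\tfrac{1}{2}\Big(\Delta_f\omega + d(di{v_f}\omega) + \tfrac{1}{2\tau}\omega\Big),$$
so applying $di{v_f}$ to both sides and using linearity reduces the claim to evaluating $di{v_f}$ on each of the three terms on the right-hand side.

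First I would handle the term $di{v_f}\Delta_f\omega$ using the commutator formula of Theorem~\ref{divomega}, which gives $di{v_f}\Delta_f\omega = \Delta_f di{v_f}\omega + \tfrac{1}{2\tau}di{v_f}\omega$. Next, since $a:=di{v_f}\omega$ is a scalar function, its differential is $da=\nabla a$, and the defining relation $\Delta_f = di{v_f}\nabla$ immediately yields $di{v_f}\, d(di{v_f}\omega) = di{v_f}\nabla a = \Delta_f a = \Delta_f di{v_f}\omega$. The remaining term is simply $\tfrac{1}{2\tau}di{v_f}\omega$.

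Collecting the three contributions inside the bracket gives $2\Delta_f di{v_f}\omega + \tfrac{1}{\tau}di{v_f}\omega$, and multiplying by $-\tfrac{1}{2}$ produces exactly $-\big(\Delta_f di{v_f}\omega + \tfrac{1}{2\tau}di{v_f}\omega\big)$, which is the asserted identity. There is no genuine obstacle here: the computation is entirely algebraic once Corollary~\ref{divdivshop} and Theorem~\ref{divomega} are in hand, and the only point requiring a moment's care is the identification $di{v_f}\,d = \Delta_f$ on functions, which follows directly from the definition $\Delta_f = di{v_f}\nabla$ together with $da = \nabla a$. Notably, this is cleaner than the companion Lemma~\ref{shopdivshop}, since here the scalar-valued nature of $di{v_f}\omega$ eliminates any Hessian or curvature contributions.
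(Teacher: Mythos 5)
Your proposal is correct and follows essentially the same route as the paper: apply $di{v_f}$ to the identity of Corollary~\ref{divdivshop}, use Theorem~\ref{divomega} on the term $di{v_f}\Delta_f\omega$, and observe that $di{v_f}\,d(di{v_f}\omega)=\Delta_f\,di{v_f}\omega$ since $\Delta_f=di{v_f}\nabla$ on functions. The arithmetic collecting the terms matches the paper's computation exactly.
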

\begin{proof}
 From  Corollary (\ref{divdivshop})   we  have
 $$	di{v_f}di{v_f}(div_f^\dag \omega ) = - \frac{1}{2}di{v_{f}}({\Delta _f}\omega  + d(di{v_f}\omega ) + \frac{1}{{2\tau }}\omega )$$

	 Now  according to   Theorem(\ref{divomega})  we  have
	 \begin{eqnarray}
di{v_f}di{v_f}(div_f^\dag \omega ) &=&  -\frac{1}{2}({\Delta _f}di{v_f}\omega  + \frac{1}{{2\tau }}di{v_f}\omega  + {\Delta _f}di{v_f}\omega  + \frac{1}{{2\tau }}di{v_f}\omega )\nonumber\\
	&=& -( {\Delta _f}di{v_f}\omega+ \frac{1}{{2\tau }}di{v_f}\omega )\nonumber  
\end{eqnarray}
\end{proof}

\begin{theorem}\label{kernel}
		For a closed orientable $GRS^+$ $(M^{n}, g,f,\tau)$ and $\omega\in\Omega^{1}(M)$ we have 
		\begin{eqnarray}
			N(div^\dag_{f}\omega)=0
		\end{eqnarray}
	\end{theorem}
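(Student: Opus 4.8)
The plan is to substitute $h=div_f^\dag\omega$ directly into the expression for the stability operator from Theorem \ref{sec}, namely
\[
N(h)=\tfrac12\Delta_{f,L}h+\tfrac1{2\tau}h+div_f^\dag div_f h+\tfrac12\nabla^2\upsilon_h-Ric\,\frac{\int_M\langle Ric,h\rangle\,dm}{\int_M R\,dm},
\]
and to verify that, term by term, everything cancels. The commutator formulas of the previous section, together with Lemmas \ref{shopdivshop} and \ref{divdivdivshop}, are tailored precisely for this computation, so the argument should reduce to organized bookkeeping once the correct identity for $\upsilon_h$ is pinned down.

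First I would collect the three ``differential'' contributions $\tfrac12\Delta_{f,L}h+\tfrac1{2\tau}h+div_f^\dag div_f h$. Using Corollary \ref{divshoplich} in the form $\Delta_{f,L}div_f^\dag\omega=div_f^\dag\Delta_f\omega-\tfrac1{2\tau}div_f^\dag\omega$, the first two terms become $\tfrac12 div_f^\dag\Delta_f\omega+\tfrac1{4\tau}div_f^\dag\omega$. For the third term I would invoke Lemma \ref{shopdivshop}, which gives $div_f^\dag div_f(div_f^\dag\omega)=-\tfrac12\Delta_{f,L}div_f^\dag\omega-\tfrac1{2\tau}div_f^\dag\omega+\tfrac12\nabla^2 div_f\omega$; feeding in the same Corollary \ref{divshoplich} identity converts this into $-\tfrac12 div_f^\dag\Delta_f\omega-\tfrac1{4\tau}div_f^\dag\omega+\tfrac12\nabla^2 div_f\omega$. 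Adding the two contributions, the $div_f^\dag\Delta_f\omega$ pieces and the $\tfrac1{4\tau}div_f^\dag\omega$ pieces cancel in pairs, leaving exactly $\tfrac12\nabla^2 div_f\omega$.

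The crux, and the step I expect to be the real point of the argument, is identifying $\upsilon_h$. By definition $\upsilon_h$ solves $(\Delta_f+\tfrac1{2\tau})\upsilon_h=div_f div_f h$ with $\int_M\upsilon_h\,dm=0$. Lemma \ref{divdivdivshop} evaluates the right-hand side as $div_f div_f(div_f^\dag\omega)=-(\Delta_f+\tfrac1{2\tau})div_f\omega$, so $-div_f\omega$ is a solution of the defining equation, and it has zero mean by the weighted divergence theorem. Uniqueness is where Theorem \ref{spec} enters: since the first nonzero eigenvalue of $\Delta_f$ lies strictly below $-\tfrac1{2\tau}$, the operator $\Delta_f+\tfrac1{2\tau}$ is injective, forcing $\upsilon_h=-div_f\omega$. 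Then $\tfrac12\nabla^2\upsilon_h=-\tfrac12\nabla^2 div_f\omega$ cancels the surviving Hessian term from the previous paragraph. Finally, for the projection term I would shift $div_f^\dag$ onto $Ric$ through the weighted adjoint pairing, writing $\int_M\langle Ric,div_f^\dag\omega\rangle\,dm=\int_M\langle div_f Ric,\omega\rangle\,dm$, which vanishes because $div_f Ric=0$ by Theorem \ref{us}. With all four remaining contributions killed, we conclude $N(div_f^\dag\omega)=0$.
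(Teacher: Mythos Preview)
Your proof is correct and follows essentially the same approach as the paper: identify $\upsilon_h=-div_f\omega$ via Lemma~\ref{divdivdivshop} and uniqueness, kill the Ricci coefficient via $div_f Ric=0$, and use Lemma~\ref{shopdivshop} to make the remaining terms cancel. The only cosmetic difference is that you rewrite $\Delta_{f,L}div_f^\dag\omega$ as $div_f^\dag\Delta_f\omega-\tfrac{1}{2\tau}div_f^\dag\omega$ before cancelling, whereas the paper leaves the $\tfrac12\Delta_{f,L}div_f^\dag\omega+\tfrac{1}{2\tau}div_f^\dag\omega$ block intact and observes that Lemma~\ref{shopdivshop} produces exactly its negative plus $\tfrac12\nabla^2 div_f\omega$; your extra rewriting is unnecessary but harmless.
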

\begin{proof}	

Stability operator of the $\nu$-entropy has four non trivial  terms $\Delta_{f,L},div^\dag_{f}div_{f},\nabla^{2}\upsilon_{h}$ and finally coefficient of Ricci tensor, at  first we calculate all of these terms   on $Im( div^\dag_{f})$ seperatively, for this we need  to calculate
$$\Delta_{f,L}div_f^\dag \omega,	div_f^\dag di{v_f}(div_f^\dag \omega ),di{v_f}di{v_f}(div_f^\dag \omega )$$
		 For computation of $\upsilon_{h}$, from  Lemma(\ref{divdivdivshop}) it follows that $$di{v_f}di{v_f}(div_f^\dag \omega )= -( {\Delta _f}di{v_f}\omega+ \frac{1}{{2\tau }}di{v_f}\omega )$$  therefore according to uniquness of $\upsilon_h$, we conclude that for $h=div^\dag_{f}{\omega}$,  $\upsilon_{h}=-div_{f}\omega$, therefore we have \begin{eqnarray}
	\nabla^{2}{\upsilon_{h}}=-\nabla^{2}{div_{f}\omega}\nonumber
\end{eqnarray} 
For $div^\dag_{f}div_{f}h$ from Lemma(\ref{shopdivshop})  it follows that
$$div_f^\dag di{v_f}(div_f^\dag \omega )=\frac{{ - 1}}{2}{\Delta _{f,L}}div_f^\dag \omega  - \frac{{1}}{{2\tau }}div_f^\dag \omega  + \frac{1}{2}{\nabla ^2}di{v_f}\omega $$
 Furthermore we have
 \begin{eqnarray}
 	\displaystyle\int_{M}<Ric,div^\dag_{f}\omega>dm&=&\displaystyle \int_{M}<div_{f}(Ric),\omega>dm\nonumber\\
 	&=&0\nonumber
 \end{eqnarray}
Now if we  substitute these terms in the experssion of the stability operator,  then we have
\begin{eqnarray}
	N(h) &=&\frac{1}{2}{\Delta_{f,L}}div^\dag_{f}\omega +\frac{1}{2\tau}div^\dag_{f}\omega + div_f^\dag di{v_f}div^\dag_{f}\omega +\frac{1}{2}{\nabla ^2}{{\upsilon}_h} - Ric\frac{{\int_{M} { < Ric,div^\dag_{f}\omega > dm} }}{{\int_{M}  {Rdm} }}\nonumber\\
	&=&\frac{1}{2}{\Delta_{f,L}}div^\dag_{f}\omega +\frac{1}{2\tau}div^\dag_{f}\omega + div_f^\dag di{v_f}div^\dag_{f}\omega +\frac{1}{2}{\nabla ^2}{{\upsilon}_h} \nonumber\\
	&=&\frac{1}{2}{\Delta_{f,L}}div^\dag_{f}\omega +\frac{1}{2\tau}div^\dag_{f}\omega+\frac{{ - 1}}{2}{\Delta _{f,L}}div_f^\dag \omega  - \frac{{1}}{{2\tau }}div_f^\dag \omega  + \frac{1}{2}{\nabla^2}di{v_f}\omega-\frac{1}{2}\nabla^{2}{div_{f}\omega}\nonumber\\
	&=&0\nonumber
  \end{eqnarray}
\end{proof}

\section{Main Results}

Cao and Zhu in \cite{cao zhu} using  similarity between  shrinking soliton of mean curvature flow and $GRS^+$ of Ricci flow and work of  Colding and Minicozzi in linear stability of  shrinking soliton of mean curvature flow,  have shown
that  for linear stability of $GRS^+$ metrics it is neccesary that the first eigenvalue of the weighted  Lichnerowicz Laplace operator (restricted to transversal tensor i.e $div_{f}h=0$) is zero with  multiplicity one  and with Ric being an eigentensor  $Ric$. In this section we will extend  their result.  First  we eliminate condition $(h\in Ker(div_{f}))$ and we replace zero bound with a weaker bound. Secondly we find the sufficient condition for linear stability of a $GRS^+$. 
Finally we find some relations between eigentensors of the stability operator $N$ and the weighted Lichnerowics Laplacian $\Delta_{f,L}$.

\newtheorem{nec}{Theorem}[section]
\begin{nec}\label{nec}
	  A necessary condition for linear stability	 of closed oraiantable $GRS^+$ $(M^{n}, g,f,\tau)$ is that the first eigenvalue of  the weighted Lichnerowicz Laplacian $\Delta_{f,L}$ (except zero with one multiplicity and Ricci tensor  $Ric$ as eigentensor) is not greather than $-\frac{1}{2\tau}$.
	\end{nec}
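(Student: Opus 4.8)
The plan is to feed eigentensors of $\Delta_{f,L}$ into the second variation formula (Theorem \ref{sec}) and exploit the $dm$-orthogonal splitting $C^\infty(S^2(T^*M))=Ker(div_f)\oplus Im(div_f^\dag)$, showing that every eigenvalue of $\Delta_{f,L}$ other than the simple zero carried by $Ric$ is at most $-\tfrac1{2\tau}$; since the ``first eigenvalue'' is the top of the spectrum, this is exactly the assertion. First I would record that $\Delta_{f,L}$ preserves each summand: rearranging Theorem \ref{divlich} gives $div_f\Delta_{f,L}h=(\Delta_f-\tfrac1{2\tau})div_f h$, so $\Delta_{f,L}$ maps $Ker(div_f)$ into itself, while Corollary \ref{divshoplich} gives $\Delta_{f,L}div_f^\dag\omega=div_f^\dag(\Delta_f-\tfrac1{2\tau})\omega$, so it maps $Im(div_f^\dag)$ into itself. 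As the two summands are $dm$-orthogonal and $\Delta_{f,L}$ is self-adjoint, each eigentensor may be taken to lie entirely in one summand, and it suffices to bound the two restricted spectra separately.

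On the transversal summand the computation is the easy one. If $h\in Ker(div_f)$ with $\Delta_{f,L}h=\lambda h$ and $\lambda\ne0$, then $div_f div_f h=0$ forces $\upsilon_h=0$ by uniqueness (Theorem \ref{spec}), so $\nabla^2\upsilon_h=0$ and $div_f^\dag div_f h=0$; moreover $h$ and $Ric$ are eigentensors of the self-adjoint $\Delta_{f,L}$ for the distinct eigenvalues $\lambda$ and $0$, hence $\langle Ric,h\rangle_{dm}=0$ and the Ricci coefficient drops out. Thus $N(h)=\tfrac12\bigl(\lambda+\tfrac1\tau\bigr)h$ and $\nu_g''(h)=\tfrac12\bigl(\lambda+\tfrac1\tau\bigr)|h|_{dm}^2$, so linear stability forces $\lambda\le-\tfrac1\tau\le-\tfrac1{2\tau}$.

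The $Im(div_f^\dag)$ summand is where the real content lies, and where the bound $-\tfrac1{2\tau}$ (rather than $-\tfrac1\tau$) originates. Here $N$ vanishes identically by Theorem \ref{kernel}, so stability gives no information and I would instead bound the eigenvalue \emph{unconditionally}. Writing $h=div_f^\dag\omega$ and setting $B:=div_f div_f^\dag$, a nonnegative self-adjoint operator on $1$-forms (it is $A^\ast A$ for $A=div_f^\dag$), I pair $\Delta_{f,L}h=\lambda h$ with $h$ and use Corollary \ref{divshoplich} to obtain
\begin{equation}
\lambda=\frac{\langle \Delta_f\omega,B\omega\rangle_{dm}}{\langle \omega,B\omega\rangle_{dm}}-\frac1{2\tau},\qquad \langle\omega,B\omega\rangle_{dm}=|h|_{dm}^2>0. \nonumber
\end{equation}
The crucial structural input is that $\Delta_f$ and $B$ commute: combining $div_f^\dag\Delta_f=(\Delta_{f,L}+\tfrac1{2\tau})div_f^\dag$ (Corollary \ref{divshoplich}) with $div_f\Delta_{f,L}=(\Delta_f-\tfrac1{2\tau})div_f$ (Theorem \ref{divlich}) yields $B\Delta_f=\Delta_f B$. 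Since $\Delta_f$ is the weighted rough Laplacian on $1$-forms it is nonpositive, $\langle\Delta_f\omega,\omega\rangle_{dm}=-|\nabla\omega|_{dm}^2\le0$, so writing $B^{1/2}$ for the commuting square root gives $\langle\Delta_f\omega,B\omega\rangle_{dm}=\langle\Delta_f B^{1/2}\omega,B^{1/2}\omega\rangle_{dm}\le0$, whence $\lambda\le-\tfrac1{2\tau}$.

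Assembling the two cases, under linear stability every eigenvalue of $\Delta_{f,L}$ apart from the simple zero with eigentensor $Ric$ is at most $-\tfrac1{2\tau}$, which is the claim. The main obstacle is the $Im(div_f^\dag)$ estimate, and within it the commutation $[\Delta_f,B]=0$; everything else is bookkeeping. I would take care to justify the simultaneous diagonalization (equivalently, the commuting square-root argument) using the discreteness of the spectra of $\Delta_f$ and $B$ on the closed manifold $M$, and to note that $\langle\omega,B\omega\rangle_{dm}>0$ holds precisely because $h=div_f^\dag\omega\ne0$.
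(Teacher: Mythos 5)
Your route is genuinely different from the paper's, and the two cases you do treat are essentially sound. The paper never splits $h$: for an eigentensor with $\lambda>-\frac{1}{2\tau}$, $\lambda\neq 0$, it uses $\Delta_f div_f div_f h=div_f div_f\Delta_{f,L}h$ together with the spectral gap of Theorem \ref{spec} to force $div_f div_f h=0$, hence $\upsilon_h=0$, and then reads off $\nu_g''(h)=\int_M(\frac{\lambda}{2}+\frac{1}{2\tau})|h|^2dm+\int_M|div_f h|^2dm>0$. Your Ebin--Berger splitting buys something the paper does not state explicitly: on $Im(div_f^\dag)$ the bound $\lambda\le-\frac{1}{2\tau}$ is \emph{unconditional} (your identity $\lambda=\langle\Delta_f\omega,B\omega\rangle_{dm}/\langle\omega,B\omega\rangle_{dm}-\frac{1}{2\tau}$ and the commutation $[B,\Delta_f]=0$ do check out against Corollary \ref{divshoplich} and Theorem \ref{divlich}), while on $Ker(div_f)$ stability even forces the sharper bound $\lambda\le-\frac{1}{\tau}$. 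The functional-analytic points you flag (invariance of the two summands under $\Delta_{f,L}$, simultaneous diagonalization, the commuting square root) are real but standard on a closed manifold.

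There is, however, a genuine gap: you never prove the parenthetical part of the statement, namely that zero is a simple eigenvalue with $Ric$ as its only eigentensor. Your transversal case explicitly assumes $\lambda\neq 0$, and your $Im(div_f^\dag)$ estimate only shows that a zero-eigentensor must lie in $Ker(div_f)$; nothing in your argument rules out a second zero-eigentensor $h\in Ker(div_f)$ with $h\neq\mu Ric$. For such an $h$ one still has $\upsilon_h=0$ and $div_f^\dag div_f h=0$, but the Ricci projection term survives, so $N(h)=\frac{1}{2\tau}h-Ric\,\big(\int_M\langle Ric,h\rangle dm\big)/\big(\int_M R\,dm\big)$, and showing $\nu_g''(h)>0$ requires the identity $\int_M R\,dm=2\tau\int_M|Ric|^2dm$ from Theorem \ref{us} together with the Cauchy--Schwarz inequality for $\langle\cdot,\cdot\rangle_{dm}$, with equality occurring exactly in the excluded case $h=\mu Ric$. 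This is the entire second half of the paper's proof and is a separate argument, not bookkeeping; as written, your proposal establishes the eigenvalue bound but not the simplicity of the zero eigenvalue.
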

	  
	  \begin{proof}  
	  Suppose that  $\Delta_{f,L}h=\lambda h$ for $h\in  C^\infty({S^2}({T^*}M)),\lambda\in \mathbb{R}$ and $\lambda> -\frac{1}{2\tau}$.\\
	  First suppose that $\lambda \not=0$, 
	   we compute the second variation of the $\nu$-entropy in the direction $h$, for this purpose we should  compute $N(h)$.  Similar to privious section we  have to compute four non trivial  terms $\Delta_{f,L},div^\dag_{f}div_{f},\nabla^{2}\upsilon_{h}$ and finally coefficient of Ricci tensor,	  
	   first we computate $\upsilon_{h}$. Because $\Delta_{f,L}h=\lambda h$, therefore by Theorem(\ref{divdivlich}) we have
	  $$	\Delta_{f}div_{f}div_{f}h=div_{f}div_{f}\Delta_{f,L}h=\lambda div_{f}div_{f}h$$
	  And therefore $$	\Delta_{f}div_{f}div_{f}h+\frac{1}{2\tau}div_{f}div_{f}h=(\lambda+\frac{1}{2\tau})div_{f}div_{f}h$$  so  $\lambda+\frac{1}{2\tau}$ is an eigenvalue of $\Delta_{f}$ and $div_{f}div_{f}h$
       is an eigenfunction of $\Delta_{f}$, but according to espectral estimate of Theorem (\ref{spec})it follows that $\lambda< -\frac{1}{2\tau}$, therefore  $div_{f}div_{f}h=0$ and according to  uniquness of $\upsilon_{h}$, $\upsilon_{h}=0$.
           On the other hand we assume that $\lambda\not=0$ therefore
  \begin{eqnarray}
  	\displaystyle{\int_{M}}<Ric,h>dm&=&\int_{M}<Ric,{\frac{\lambda}{\lambda}}  \  h>dm\nonumber\\
  	&=&\int_{M}<Ric,{\frac{1}{\lambda}}  \Delta_{f,L} h>dm\nonumber\\
    	&=&\int_{M}<\Delta_{f,L}Ric,{\frac{1}{\lambda}}h>dm\nonumber\\
    	&=&0\nonumber
  \end{eqnarray} 
From Theorem (\ref{us}) we have $\Delta_{f,L}Ric=0$, therefore the coefficient of  Ricci tensor in the experssion of the stability operator is zero.
$$	\displaystyle{\int_{M}}<Ric,h>dm=0$$	  
Now we have
\begin{eqnarray}
	{\nu _g}^{''}(h)= \displaystyle\int_{M} { < N(h),h > dm}\nonumber
\end{eqnarray}
here 
$$N(h) =\frac{1}{2}{\Delta_{f,L}}h +\frac{1}{2\tau}h + div_f^\dag di{v_f}h +\frac{1}{2}{\nabla ^2}{{\upsilon}_h} - Ric\frac{{\int_{M} { < Ric,h > dm} }}{{\int_{M}  {Rdm} }}$$
In this expersion the coefficient of Ricci is zero thus
$$N(h) =(\frac{\lambda}{2} +\frac{1}{2\tau}) h+ div_f^\dag di{v_f}h +\frac{1}{2}{\nabla ^2}{{\upsilon}_h} $$
Therefore
\begin{eqnarray}
	{\nu _g}^{''}(h)&=& \displaystyle\int_{M} { < N(h),h > dm}\nonumber\\
	&=&\int_{M}<(\frac{\lambda}{2} +\frac{1}{2\tau})h + div_f^\dag di{v_f}h ,h>dm\nonumber\\
	&=&\int_{M}(\frac{\lambda}{2} +\frac{1}{2\tau})|h|^{2}dm
	+\int_{M} <div_f^\dag di{v_f}h ,h>dm\nonumber\\
	&=&\int_{M}(\frac{\lambda}{2} +\frac{1}{2\tau})|h|^{2}dm+\int_{M}|div_{f}h|^2dm\nonumber\\
	&\geqslant&\frac{1}{4\tau}\int_{M}|h|^{2}dm>0\nonumber
\end{eqnarray}
Therefore $h$ is an unstability direction and $GRS^+$ is linear unstable.

Now suppose that $\Delta_{f,L}h=0, \  \ h\not=\mu Ric , \mu\in \mathbb{R},\mu\not=0$, by Theorem (\ref{divdivlich}) it follows that 
$$\Delta_{f}div_{f}div_{f}h=div_{f}div_{f}\Delta_{f,L}h=0$$ from  the maximum principle it follows that $div_{f}div_{f}h=0$ and according to uniquness of $\upsilon_{h}$, we have $\upsilon_{h}=0$ 
Therefore we conclude that
\begin{eqnarray}
	{\nu _g}^{''}(h)&=& \displaystyle\int_{M} { < N(h),h > dm}\nonumber\\
	&=&\int_{M}< \frac{1}{2\tau}h + div_f^\dag di{v_f}h - Ric\frac{{\int_{M} { < Ric,h > dm} }}{{\int_{M}  {Rdm} }} ,h>dm\nonumber\\
	&=&\int_{M} \frac{1}{2\tau}|h|^{2}dm + | di{v_f}h|^{2}dm -\frac{({\int_{M} { < Ric,h > dm} })^2}{{\int_{M}  {Rdm} }} \nonumber\\
	&\geqslant&\int_{M} \frac{1}{2\tau}|h|^{2}dm  -\frac{({\int_{M} { < Ric,h > dm} })^2}{{\int_{M}  {Rdm} }} \ . \nonumber
\end{eqnarray}

From Theorem (\ref{us}) we know that $\displaystyle \int_{M} R dm=\displaystyle\int_{M}2\tau|Ric|^{2}  dm$, therefore 
\begin{eqnarray}
	{\nu _g}^{''}(h)&\geqslant&\int_{M} \frac{1}{2\tau}|h|^{2}dm  -\frac{({\int_{M} { < Ric,h > dm} })^2}{{2\tau\int_{M}  {|Ric|^{2}dm} }} \nonumber\\
	&\geqslant&\Big(\int_{M} \frac{1}{2\tau}|h|^{2}dm\int_{M} 2\tau  {|Ric|^{2}dm}  -({\int_{M} { < Ric,h > dm} })^2\Big)\frac{1}{{{2\tau\int_{M}  {|Ric|^{2}dm} }}} \nonumber\\
	&\geqslant&\Big(\int_{M} |h|^{2}dm\int_{M}  {|Ric|^{2}dm}  -({\int_{M} { < Ric,h > dm} })^2\Big)\frac{1}{{{2\tau\int_{M}  {|Ric|^{2}dm} }}}  \ . \nonumber
\end{eqnarray}
But according to Cauchy–Schwarz inequality we know that  last term is positive and is zero if and only if $h=\mu Ric , \mu\in \mathbb{R},\mu\not=0$ but we assume that  $h\not=\mu Ric , \mu\in \mathbb{R},\mu\not=0$ therefore $\nu^{''}_{g}(h)>0$ and soliton is unstable

\end{proof}

\begin{theorem}\label{suf}
	Suppose that for a closed oraiantable $GRS^+$ $(M^{n}, g,f,\tau)$ the first eigenvalue of the weighted Lichnerowicz Laplacian $\Delta_{f,L}$ is not greather than $-\frac{1}{\tau}$ (except zero with one multiplicity and Ricci tensor  $Ric$ as eigentensor)  , then soliton is linear stable
\end{theorem}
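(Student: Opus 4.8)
The plan is to reduce the problem, via the Ebin--Berger decomposition together with the vanishing of the stability operator on $Im(div_f^\dag)$ (Theorem \ref{kernel}), to a sign computation on transverse tensors, where the second variation diagonalizes over the eigentensors of $\Delta_{f,L}$ and the threshold $-\frac{1}{\tau}$ appears transparently.

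First I would use the orthogonal splitting $C^\infty(S^2(T^*M)) = Ker(div_f)\oplus Im(div_f^\dag)$ recalled in Section 3 to write an arbitrary perturbation as $h = h^T + div_f^\dag\omega$ with $div_f h^T = 0$. Since $N$ is self-adjoint with respect to $\langle\cdot,\cdot\rangle_{dm}$ (Theorem \ref{sec}) and $N(div_f^\dag\omega) = 0$ by Theorem \ref{kernel}, both the $Im(div_f^\dag)$ self-term and the cross terms drop out, leaving $\nu_g''(h) = \int_M\langle N(h^T), h^T\rangle\, dm = \nu_g''(h^T)$. Thus it suffices to prove $\nu_g'' \le 0$ on transverse tensors. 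This reduction is the crux of the argument: it is exactly what eliminates the troublesome \emph{nonnegative} term $\int_M |div_f h|^2\, dm$ that appears in $\langle N(h),h\rangle$ for a general $h$ (together with the Hessian term $\tfrac12\int_M\langle\nabla^2\upsilon_h,h\rangle\,dm$), a contribution I do not see how to dominate by the negative terms otherwise; the difficulty of the whole theorem is concentrated here.

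Next I would expand $h^T$ in eigentensors of $\Delta_{f,L}$. From Theorem \ref{divlich} one has $div_f\Delta_{f,L}h^T = \Delta_f div_f h^T - \frac{1}{2\tau}div_f h^T = 0$, so $\Delta_{f,L}$ preserves $Ker(div_f)$; being self-adjoint and elliptic on the closed manifold, its restriction to $Ker(div_f)$ admits an $L^2(dm)$-orthonormal basis of transverse eigentensors, and I would write $h^T = \sum_i c_i k_i$ with $\Delta_{f,L}k_i = \lambda_i k_i$ and $div_f k_i = 0$.

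Finally I would check that $N$ acts diagonally on these. For a transverse eigentensor $k$ with $\Delta_{f,L}k = \lambda k$ one has $div_f^\dag div_f k = 0$, while $div_f div_f k = 0$ forces $\upsilon_k = 0$ (by the uniqueness guaranteed by Theorem \ref{spec}) and hence $\nabla^2\upsilon_k = 0$. If $\lambda\neq 0$, then $\int_M\langle Ric,k\rangle\,dm = \frac{1}{\lambda}\int_M\langle Ric,\Delta_{f,L}k\rangle\,dm = \frac{1}{\lambda}\int_M\langle\Delta_{f,L}Ric,k\rangle\,dm = 0$ since $\Delta_{f,L}Ric = 0$ (Theorem \ref{us}), so the Ricci term drops and $N(k) = \big(\frac{\lambda}{2}+\frac{1}{2\tau}\big)k$; if $\lambda = 0$ the hypothesis forces $k\parallel Ric$, and a direct computation using $div_f Ric = 0$ and $\int_M R\,dm = 2\tau\int_M|Ric|^2\,dm$ (Theorem \ref{us}) gives $N(Ric) = \frac{1}{2\tau}Ric - Ric\,\frac{\int_M|Ric|^2\,dm}{\int_M R\,dm} = 0$. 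Consequently $\nu_g''(h^T) = \sum_i c_i^2\big(\frac{\lambda_i}{2}+\frac{1}{2\tau}\big)$, where the $Ric$-direction contributes $0$; and since every $\lambda_i\neq 0$ satisfies $\lambda_i\le -\frac{1}{\tau}$, each coefficient obeys $\frac{\lambda_i}{2}+\frac{1}{2\tau}\le -\frac{1}{2\tau}+\frac{1}{2\tau}=0$. Hence $\nu_g''(h) = \nu_g''(h^T)\le 0$ for every $h$, which is precisely linear stability. Beyond the reduction of the second paragraph the remaining steps are routine bookkeeping, and the bound $-\frac{1}{\tau}$ is seen to be exactly the condition making $\frac{\lambda}{2}+\frac{1}{2\tau}$ nonpositive.
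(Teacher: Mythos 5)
Your proof is correct and arrives at the same final sign computation as the paper, but you get there by a genuinely different (and leaner) route. The paper's own proof does not invoke the Ebin--Berger splitting here; instead it shows that $\Delta_{f,L}$ and $\Delta_{f,L}+div_f^\dag div_f$ are commuting (Theorem \ref{shopdivlich}), self-adjoint, strongly elliptic operators --- the strong ellipticity of the second requiring a principal-symbol computation --- and extracts a common orthonormal eigenbasis $\{h_i\}$ with $\Delta_{f,L}h_i=\lambda_i h_i$ and $(\Delta_{f,L}+div_f^\dag div_f)h_i=\mu_i h_i$. The dichotomy $\lambda_i\neq\mu_i$ versus $\lambda_i=\mu_i$ then reproduces exactly your splitting: in the first case $h_i=(\lambda_i-\mu_i)^{-1}div_f^\dag div_f h_i\in Im(div_f^\dag)$ and $N(h_i)=0$ by Theorem \ref{kernel}; in the second case $div_f h_i=0$ and $N(h_i)=(\frac{\lambda_i}{2}+\frac{1}{2\tau})h_i$. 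Your version replaces this simultaneous-diagonalization machinery by quoting Ebin--Berger directly and using self-adjointness of $N$ together with $N\circ div_f^\dag=0$ to discard the cross terms; this is cleaner and needs only Theorem \ref{divlich} (invariance of $Ker(div_f)$ under $\Delta_{f,L}$) plus the spectral theorem for the single operator $\Delta_{f,L}$. The one point you should make explicit is why the restriction of $\Delta_{f,L}$ to $Ker(div_f)$ still admits a complete orthonormal basis of \emph{smooth transverse} eigentensors: e.g.\ observe via Corollary \ref{divshoplich} that the complementary summand $Im(div_f^\dag)$ is also $\Delta_{f,L}$-invariant, so each finite-dimensional eigenspace splits compatibly with the orthogonal decomposition. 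Everything else --- the vanishing of $\upsilon_k$ via the uniqueness coming from Theorem \ref{spec}, the vanishing of the Ricci coefficient for $\lambda\neq0$ via $\Delta_{f,L}Ric=0$, the identity $N(Ric)=0$ from $\int_M R\,dm=2\tau\int_M|Ric|^2\,dm$, and the sign count $\frac{\lambda_i}{2}+\frac{1}{2\tau}\leq 0$ under the hypothesis $\lambda_i\leq-\frac{1}{\tau}$ --- matches the paper's argument step for step.
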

\begin{proof}
	The main idea of the proof is that for arbitrary  $h\in  C^\infty({S^2}({T^*}M))$
	 for simplification  of computation, we write $h$ in a common orthonormal eigenbasis of the two operators $\Delta_{f,L}$ and $\Delta_{f,L}+div^\dag_{f}div_{f}$ (we prove that this eigenbasis exists), then  we prove that the second variation of the $\nu$-entropy in the direction $h$ is nonpositive.	
	  
	 First we prove that   two operators $\Delta_{f,L}$ and $\Delta_{f,L}+div^\dag_{f}div_{f}$ have common orthogonal eigenbasis, for this purpose we  prove that  these two operators are diagonalizable.\newline
	  These two operators are strongly elliptic. Indeed 
	 suppose that we have a differential operator $L:C^{\infty}(E)\rightarrow C^{\infty}(F)$ between two vector bundles
	 $(E,F)$,  and  suppose that  the principal symbol of $L$ in the direction $\omega\in \Omega^{1}(M)$ is denoted by $\sigma_{\omega}(D)$( i.e we have $\sigma_{\omega}:E_{p}\rightarrow F_{p}$).
	 Now trivially the operator $\Delta_{f,L}$  which is a twisted Lichnerowicz  Laplacian, is strongly elliptic with principal symbol
	 $$\sigma_{\omega}(\Delta_{f,L})h={|\omega|^2}h,$$for principal symbol of   $\Delta_{f,L}+div^\dag_{f}div_{f}$, a simple computation shows that 
	 $$\sigma_{\omega}(\Delta_{f,L}+div^\dag_{f}div_{f})h={|\omega|^2}h+h(\sharp\omega,-)\otimes\omega+\omega\otimes h(\sharp\omega,-) $$
	 such that $h \in C^{\infty}(S^2(T^*(M)))$ and $\omega\in\Omega^{1}(M)$. Now we have
 	
 	  \begin{eqnarray}
 	 &&<\sigma_{\omega}(\Delta_{f,L}+div^\dag_{f}div_{f})h,h>\nonumber\\
 	 &=&<{|\omega|^2}h+h(\sharp\omega,-)\otimes\omega+\omega\otimes h(\sharp\omega,-),h>\nonumber\\
 	 &=&|\omega|^{2}|h|^{2}+2|h(\sharp\omega,-)|^{2}\nonumber\\
 	 &\geqslant&|\omega|^{2}|h|^{2}\nonumber\\
 	 &\geqslant&|\omega|^{2}|h|^{2}\nonumber
 	 \end{eqnarray}
	Therefore $(\Delta_{f,L}+div^\dag_{f}div_{f})$ is strongly elliptic.

Now  $\Delta_{f,L}$ and $\Delta_{f,L}+div^\dag_{f}div_{f}$ are self-adjoint strongly  elliptic operators and by compactness of
$M$(see \cite{kroncke}page 15 ) and according to spectral theory have a discrete set of eigenvalues
$\lambda_{1}>\lambda_{2}>\lambda_{3}\dots $and $\lambda_{n}\rightarrow-\infty$  and any
eigenvalue has finite multiplicity and eigentensors of different eigenvalues are orthogonal. These two operators extend to two continuous linear maps on completion of  $C^{\infty}(S^2(T^*(M)))$ and from elliptic regularity all  eigentensors of these operators are smooth, but by Theorem(\ref{shopdivlich}) these two operators commute,  therefore should have a common basis of eigentensors(from functional analysis we know that any two diagonalizable commutative continuous maps diagonalizable simultaneously), with Gram–Schmidt process we can construct an orthonormal  common basis of eigentensors of these operators.\\
Suppose that an arbitrary tensor $h\in C^{\infty}(S^2(T^*(M)))$ may be written, in an unique way in this basis, as  $$h=\displaystyle\Sigma^{\infty}_{i=1}\lambda_{i}h_{i}$$
Now we compute the second variation of the  $\nu$-entropy in the direction $h$.
First suppose that for an  eigentensor  $h_{i}\in C^{\infty}(S^2(T^*(M)))$ of $\Delta_{f,L}$ and $\Delta_{f,L}+div^\dag_{f}div_{f}$ we have 
\begin{eqnarray}
	\Delta_{f,L}h_{i}&=&\lambda_{i}h_{i}\nonumber\\
	\Delta_{f,L}h_{i}+div^\dag_{f}div_{f}h_{i}&=&\mu_{i}h_{i}\nonumber
\end{eqnarray}
Therefore $div^\dag_{f}div_{f}=(\lambda_{i}-\mu_{i})h_{i}$. Now If $\lambda_{i}\not=\mu_{i}$,  then  $$h_{i}=(\lambda_{i}-\mu_{i})^{-1}div^\dag_{f}div_{f}h_{i}$$  
Therefore $h_{i}\in Im (div^\dag_{f})$, and by Theorem(\ref{kernel}). $$N(h_{i})=0$$

Now suppose that $\lambda_{i}=\mu_{i}$, hence $div^\dag_{f}div_{f}h_{i}=(\lambda_{i}-\mu_{i})h_{i}=0$, therefore we conclude that \begin{eqnarray}
	\int_{M}|div_{f}{h_{i}}|^{2}dm&=&\int_{M}<div^\dag_{f}div_{f}h_{i},h_{i}>dm\nonumber\\&=&0\nonumber
\end{eqnarray}
Therefore it follows that $div_{f}{h_{i}}=0$ and hence $\upsilon_{h_{i}}=0$ i.e $$div^\dag_{f}div_{f}h_{i}+\frac{1}{2}\nabla^{2}\upsilon_{h_{i}}=0$$
For this reason that we assume that $\lambda\leqslant-\frac{1}{\tau}$ and $Ric$ is only a generator of eigenspace of the zero eigenvalue of $\Delta_{f,L}$, therefore if $(h_{i}\not=\lambda Ric, \lambda\in\mathbb{R},\lambda\not=0)$  we conclude that 
\begin{eqnarray}
	\displaystyle{\int_{M}}<Ric,h_{i}>dm&=&\int_{M}<Ric,{\frac{\lambda_{i}}{\lambda_{i}}}  \  h_{i}>dm\nonumber\\
	&=&\int_{M}<Ric,{\frac{1}{\lambda_{i}}}  \Delta_{f,L} h_{i}>dm\nonumber\\
	&=&\int_{M}<\Delta_{f,L}Ric,{\frac{1}{\lambda_{i}}}h_{i}>dm\nonumber\\
	&=&0\nonumber
\end{eqnarray} 
Now for the stability operator of $h_{i}$ we have
\begin{eqnarray}
	N(h_{i})&=&(\frac{\lambda_{i}}{2} +\frac{1}{2\tau})h_{i} \nonumber
\end{eqnarray}
then  we can write $h\in C^{\infty}(S^2(T^*(M)))$  in a new way in this basis, as 
$$h=\displaystyle c_{0}Ric+\Sigma^{\infty}_{\lambda_{i}=\mu_{i}}c_{i}h_{i}+\Sigma^{\infty}_{\lambda_{j}\not=\mu_{j}}c_{j}h_{j}$$

Therefore it follows that
\begin{eqnarray}
	N(h)&=&N(c_{0}Ric+\Sigma^{\infty}_{\lambda_{i}=\mu_{i}}c_{i}h_{i}+\Sigma^{\infty}_{\lambda_{j}\not=\mu_{j}}c_{j}h_{j})\nonumber\\
	&=&N(\Sigma^{\infty}_{\lambda_{i}=\mu_{i}}c_{i}h_{i})\nonumber\\
	&=&\Sigma^{\infty}_{\lambda_{i}=\mu_{i}}c_{i}N(h_{i})\nonumber\\
	&=&\Sigma^{\infty}_{\lambda_{i}=\mu_{i}}c_{i}(\frac{\lambda_{i}}{2} +\frac{1}{2\tau})h_{i}\nonumber
\end{eqnarray}
And for the second variation of the  $\nu$-entropy in the direction $h$ we have
\begin{eqnarray}
	\nu^{''}_{g}(h)&=&\int_{M}<N(h),h>dm\nonumber\\ 
	&=&\int_{M}<\Sigma^{\infty}_{\lambda_{i}=\mu_{i}}c_{i}(\frac{\lambda_{i}}{2} +\frac{1}{2\tau})h_{i},\displaystyle c_{0}Ric+\Sigma^{\infty}_{\lambda_{r}=\mu_{r}}c_{r}h_{r}+\Sigma^{\infty}_{\lambda_{s}\not=\mu_{s}}c_{s}h_{s}>dm\nonumber\\
	&=&\int_{M}<\Sigma^{\infty}_{\lambda_{i}=\mu_{i}}c_{i}(\frac{\lambda_{i}}{2} +\frac{1}{2\tau})h_{i},\Sigma^{\infty}_{\lambda_{r}=\mu_{r}}c_{r}h_{r}>dm\nonumber
\end{eqnarray}
In the last equation we have used the fact that $N$ is a self-adjoint operator. Now because our basis is orthonormal therefore
\begin{eqnarray}
	\nu^{''}_{g}(h)
	&=&\int_{M}\Sigma^{\infty}_{\lambda_{i}=\mu_{i}}{c_{i}}^{2}(\frac{\lambda_{i}}{2} +\frac{1}{2\tau})|h_{i}|^{2}dm\nonumber\\
	&=&\Sigma^{\infty}_{\lambda_{i}=\mu_{i}}{c_{i}}^2(\frac{\lambda_{i}}{2} +\frac{1}{2\tau})\nonumber
\end{eqnarray}
But we assume that except  the zero eigenvalue  with the $Ric$ as only generator of eigenspace,  $\lambda_{i}\leqslant-\frac{1}{\tau}$ therefore
$\lambda_{i}+\frac{1}{2\tau}\leqslant0$ so $\nu^{''}_{g}(h)\leqslant0$ i.e soliton is stable. 
\end{proof}

Unfortunately for case $\lambda\in(-\frac{1}{\tau},-\frac{1}{2\tau})$ we have not any  information about stability of soliton.
   Here we give some information about the eigenvalues of the stability operator $N$.

   \begin{theorem}
   	 For a closed oraiantable $GRS^+$ $(M^{n}, g,f,\tau)$ all  eigentensors of the stability operator except eigentensors of the zero eigenvalue are eigentensors of the weighted Lichnerowicz Laplacian $\Delta_{f,L}$.
   	 
   	\end{theorem}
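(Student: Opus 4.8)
The plan is to recycle the simultaneous diagonalization machinery built in the proof of Theorem~\ref{suf} and simply read off the action of $N$ on each basis element. Concretely, I would start from a common orthonormal eigenbasis $\{h_i\}$ of the two commuting self-adjoint elliptic operators $\Delta_{f,L}$ and $\Delta_{f,L}+div_f^\dag div_f$; their commutation is Theorem~\ref{shopdivlich}, and the existence of such a basis of smooth symmetric $2$-tensors is exactly the construction carried out in Theorem~\ref{suf}. Write $\Delta_{f,L}h_i=\lambda_i h_i$ and $div_f^\dag div_f h_i=\kappa_i h_i$.

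The next step is to evaluate $N(h_i)$ case by case, essentially transcribing the computations already performed in Theorems~\ref{kernel} and~\ref{suf}. If $\kappa_i\neq 0$ then $h_i=\kappa_i^{-1}div_f^\dag div_f h_i\in Im(div_f^\dag)$ and $N(h_i)=0$ by Theorem~\ref{kernel}. If $\kappa_i=0$ then $div_f h_i=0$, so by Theorem~\ref{spec} the function $\upsilon_{h_i}$ vanishes and the Hessian term drops out; if in addition $\lambda_i\neq 0$, then $\Delta_{f,L}Ric=0$ (Theorem~\ref{us}) together with self-adjointness of $\Delta_{f,L}$ forces $\int_M<Ric,h_i>dm=0$, leaving $N(h_i)=(\frac{\lambda_i}{2}+\frac{1}{2\tau})h_i$. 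Thus away from the zero eigenspace of $\Delta_{f,L}$ the operator $N$ is already diagonal in $\{h_i\}$ with eigenvalue $\frac{\lambda_i}{2}+\frac{1}{2\tau}$.

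The one delicate point, and the main obstacle, is the rank-one term $-Ric\,\frac{\int_M<Ric,h>dm}{\int_M R\,dm}$, which keeps $N$ from commuting with $\Delta_{f,L}$ on the finite-dimensional subspace $W:=Ker(\Delta_{f,L})\cap Ker(div_f)$. Here I would argue separately: since $Ric\in W$, the operator $N$ maps $W$ into itself, so $N|_W$ is self-adjoint and diagonalizable, and a direct computation using $\int_M R\,dm=2\tau\int_M|Ric|^2dm$ from Theorem~\ref{us} gives $N(Ric)=0$ while $N(h)=\frac{1}{2\tau}h$ for every $h\in W$ orthogonal to $Ric$. Refining $\{h_i\}$ inside $W$ so that $Ric$ itself is a basis vector then yields an orthonormal basis $\{e_\alpha\}$, each $e_\alpha$ an eigentensor of $\Delta_{f,L}$ with eigenvalue $\lambda_\alpha$, which simultaneously diagonalizes $N$; the upshot is that every $e_\alpha$ with nonzero $N$-eigenvalue $\nu_\alpha$ satisfies the single relation $\nu_\alpha=\frac{\lambda_\alpha}{2}+\frac{1}{2\tau}$ (the two sources being the case $\lambda_\alpha\neq 0,\ div_f e_\alpha=0$ and the case $\lambda_\alpha=0,\ e_\alpha\in W,\ e_\alpha\perp Ric$).

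To finish, I would take an eigentensor with $N(h)=\nu h$, $\nu\neq 0$, expand $h=\sum_\alpha c_\alpha e_\alpha$, and use self-adjointness: the identity $c_\alpha\nu_\alpha=\nu c_\alpha$ forces $c_\alpha=0$ whenever $\nu_\alpha\neq\nu$. Every surviving $e_\alpha$ then has $\frac{\lambda_\alpha}{2}+\frac{1}{2\tau}=\nu$, hence the common value $\lambda_\alpha=2\nu-\frac{1}{\tau}$, so that $\Delta_{f,L}h=(2\nu-\frac{1}{\tau})h$ and $h$ is indeed an eigentensor of $\Delta_{f,L}$. I expect the bookkeeping on $W$, namely isolating the scaling direction $Ric$ (on which $N$ vanishes) from its orthogonal complement, to be the only genuinely subtle part of the argument; everything else is a direct consequence of the commutator identities and of the eigentensor computations already established.
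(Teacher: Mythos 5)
Your argument is correct, but it takes a genuinely different and considerably heavier route than the paper. The paper's proof is direct and needs no spectral decomposition: for \emph{any} $h$ it shows $div_f N(h)=0$ via $\int_M|div_fN(h)|^2dm=\int_M<N(h),div_f^\dag div_fN(h)>dm=\int_M<h,N(div_f^\dag div_fN(h))>dm=0$, using self-adjointness of $N$ together with Theorem~\ref{kernel} (i.e. $N\circ div_f^\dag=0$); hence an eigentensor $h$ of $N$ with eigenvalue $\nu\neq0$ has $div_fh=\frac{1}{\nu}div_fN(h)=0$, so $\upsilon_h=0$ and the $div_f^\dag div_f$ term drops, while $N(Ric)=0$ and self-adjointness kill the Ricci term, leaving $N(h)=\frac12\Delta_{f,L}h+\frac1{2\tau}h=\nu h$ and therefore $\Delta_{f,L}h=2(\nu-\frac1{2\tau})h$. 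You instead import the full simultaneous-diagonalization machinery of Theorem~\ref{suf} (common eigenbasis of $\Delta_{f,L}$ and $\Delta_{f,L}+div_f^\dag div_f$, which rests on Theorem~\ref{shopdivlich} and on the asserted ellipticity and completeness of the joint spectrum), compute $N$ on each basis vector, and then project. What each buys: the paper's route is a few lines and avoids any appeal to completeness of an eigenbasis; your route, at the price of that analytic input, actually diagonalizes $N$ completely and exhibits its whole spectrum ($0$ on $Im(div_f^\dag)$ and on $Ric$, and $\frac{\lambda_i}2+\frac1{2\tau}$ on the transversal $\Delta_{f,L}$-eigenspaces), which is strictly more than the theorem asks for. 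Your handling of the delicate points is sound: the rank-one Ricci term does preserve $W=Ker\Delta_{f,L}\cap Ker(div_f)$, $N(Ric)=0$ follows from $\int_MRdm=2\tau\int_M|Ric|^2dm$ of Theorem~\ref{us}, and your observation that both sources of nonzero $N$-eigenvalue obey the single relation $\nu_\alpha=\frac{\lambda_\alpha}2+\frac1{2\tau}$ (including the case $\lambda_\alpha=0$, $\nu_\alpha=\frac1{2\tau}$ on $W\ominus Ric$) is exactly what makes the final projection argument close; note only that the surviving index set is finite because each $\Delta_{f,L}$-eigenspace is finite-dimensional, so the reconstituted $h$ genuinely lies in a single eigenspace.
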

   \begin{proof}   	  
   	  Suppose that  $h_{i}\in C^{\infty}(S^2(T^*(M)))$ is an eigentensor of $\Delta_{f,L}$ with eigenvalue $\lambda_{i}\not=0$.  First we have $$div_{f}{h_{i}}=0$$    
   	   We have 
   	   \begin{eqnarray}
   	   	\int_{M}|div_{f}{N(h_{i})}|^{2}dm&=&\int_{M}<N(h_{i}),div^\dag_{f}div_{f}N(h_{i})>dm\nonumber\\&=&\int_{M}<h_{i},N(div^\dag_{f}div_{f}N(h_{i}))>dm\nonumber\\
   	   	&=&0\nonumber
   	   \end{eqnarray}	
       therefore  $div_{f}{N(h_{i})}=0$. Indeed for every  $h\in C^{\infty}(S^2(T^*(M)))$ such that $\Delta_{f,L}h\not=0$,   $div_{f}N(h)=0$.	 Secondly since $\lambda_{i}\not=0$ so 
   	 \begin{eqnarray}
   	 	\displaystyle{\int_{M}}<Ric,h_{i}>dm&=&\int_{M}<Ric,{\frac{\lambda_{i}}{\lambda_{i}}}  \  h_{i}>dm\nonumber\\
   	 	&=&\int_{M}<Ric,{\frac{1}{\lambda_{i}}}  N(h_{i})>dm\nonumber\\
   	 	&=&\int_{M}<N(Ric),{\frac{1}{\lambda_{i}}}h_{i}>dm\nonumber\\
   	 	&=&0\nonumber
   	 \end{eqnarray}
        Therefore for the stability operator of $h_{i}$ we have
    \begin{eqnarray}
     N(h_{i})=\frac{1}{2}\Delta_{f,L}h_{i}+\frac{1}{2\tau}h_{i}=\lambda_{i}h_{i}\nonumber    
    \end{eqnarray}
 And finally $$\Delta_{f,L}h_{i}=2(\lambda_{i}-\frac{1}{2\tau})h_{i}$$
   	 
   \end{proof}

 Hall and Morphy  in  \cite{hall} proved that any compact K\"ahler  $GRS^+$ with  $dim H^{1,1}(M)\geqslant2$ is linear unstable. For this purpose  they define the map
\begin{eqnarray}
	S:{\Omega ^{(1,1)}}(M) \to {C^\infty }({S^2}({T^*}M))\nonumber 
\end{eqnarray}
such that for twisted harmonic 1-1 form $(\omega,\Delta_{f,H}\omega=0)$, we have $div_f S(\omega)=0$  and  $\Delta_{f,L}S(\omega)=S(\Delta_{f,H}\omega)=0$. Then they consider a linear combination of two  twisted harmonic forms $\omega=a.\omega_{1}+b.\omega_{2}$  such that image of this linear combination is perpendicular to Ricci tensor $(\int_{M} { < S(\omega),Ric > dm} =0)$.
Now  the image of this linear combination under map $S$ is an unstability direction. Here we extend their result.
\newtheorem{hall}{Theorem}[section]
\begin{theorem}
	 Any compact orientable K\"ahler  $GRS^+$ $(M^{n}, g,f,\tau)$ with  $dim H^{1,1}(M)=1$ is linear unstable unless for twisted harmonic form $(\omega,\Delta_{f,H}\omega=0)$ and complex structure $J$, we have $S(\omega)=\omega(J,-)=\lambda Ric$ i.e $S(\omega)_{ij}=g^{rs}\omega_{ir}J_{sj}=\lambda R_{ij}$,$\lambda\in\mathbb{R},\lambda\not=0$
	\end{theorem}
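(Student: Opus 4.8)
The plan is to reduce the claim to the instability estimate in the second half of the proof of Theorem~\ref{nec}, applied to the single symmetric two-tensor $h=S(\omega)$. First I would invoke Hodge theory for the twisted Laplacian $\Delta_{f,H}$ (as in \cite{hall}) to produce a nonzero twisted harmonic $(1,1)$-form $\omega$ generating the one-dimensional space $H^{1,1}(M)$. The map $S$ is injective: from $S(\omega)_{ij}=g^{rs}\omega_{ir}J_{sj}$ and the invertibility of the complex structure $J$ as a bundle endomorphism, $S(\omega)=0$ forces $\omega=0$. Hence $h:=S(\omega)$ is a \emph{nonzero} symmetric two-tensor, and by the defining properties of $S$ recalled above it is transversal, $div_{f}h=0$, and lies in the kernel of the weighted Lichnerowicz Laplacian, $\Delta_{f,L}h=S(\Delta_{f,H}\omega)=0$.

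The key step is a dichotomy governed by the Cauchy--Schwarz equality case. Assume $h$ is not a nonzero multiple of $Ric$; since $h\neq 0$ this means $h$ and $Ric$ are not proportional in $\langle-,-\rangle_{dm}$. Because $div_{f}h=0$ we have $div_{f}div_{f}h=0$, so by uniqueness (Theorem~\ref{spec}) the associated function vanishes, $\upsilon_h=0$; the Hessian term then drops out of $N(h)$ and the divergence term contributes nothing to the second variation. I would then compute, exactly as in the proof of Theorem~\ref{nec},
\begin{eqnarray}
{\nu_g}''(h) &=& \int_M \frac{1}{2\tau}|h|^2\,dm - \frac{\big(\int_M \langle Ric,h\rangle\,dm\big)^2}{\int_M R\,dm}. \nonumber
\end{eqnarray}
Substituting $\int_M R\,dm = 2\tau\int_M |Ric|^2\,dm$ from Theorem~\ref{us} rewrites this as
\begin{eqnarray}
{\nu_g}''(h) &=& \frac{1}{2\tau\int_M |Ric|^2\,dm}\Big(\int_M |h|^2\,dm\int_M |Ric|^2\,dm - \big(\int_M \langle Ric,h\rangle\,dm\big)^2\Big). \nonumber
\end{eqnarray}
By the Cauchy--Schwarz inequality for $\langle-,-\rangle_{dm}$ the right-hand side is strictly positive precisely because $h$ is not proportional to $Ric$; hence ${\nu_g}''(h)>0$ and the soliton is linear unstable.

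It follows that whenever the soliton is linear stable, $h=S(\omega)$ must realize the equality case of Cauchy--Schwarz, i.e. $S(\omega)=\lambda Ric$ for some $\lambda\in\mathbb{R}$, with $\lambda\neq 0$ since $S(\omega)\neq 0$. This is exactly the asserted exceptional configuration $S(\omega)=\omega(J,-)=\lambda Ric$. The contrast with \cite{hall} is instructive: when $\dim H^{1,1}(M)\geq 2$ one has enough harmonic forms to take a combination $\omega=a\omega_1+b\omega_2$ forcing $\int_M \langle Ric,S(\omega)\rangle\,dm=0$, which drives the Cauchy--Schwarz gap strictly open; with $\dim H^{1,1}(M)=1$ no such freedom remains, so the proportional case becomes a genuine, unavoidable exception.

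The main obstacle I anticipate is not the variational estimate---which is essentially inherited from Theorem~\ref{nec}---but rather cleanly importing the structural inputs from \cite{hall}: the existence of a nonzero twisted harmonic $(1,1)$-representative when $\dim H^{1,1}(M)=1$, and the verification that $S$ carries twisted harmonic forms into the transversal kernel of $\Delta_{f,L}$. Once those facts are granted, the argument is a direct specialization of the necessary-condition theorem to a single harmonic form, so I would make sure to state them precisely, with references, before invoking the second-variation computation.
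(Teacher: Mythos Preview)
Your proposal is correct and follows essentially the same route as the paper: set $h=S(\omega)$ with $\Delta_{f,L}h=0$ and $div_f h=0$ (citing \cite{hall}), reduce to the $\lambda=0$ case of Theorem~\ref{nec}, and use Cauchy--Schwarz together with $\int_M R\,dm=2\tau\int_M|Ric|^2\,dm$ to force ${\nu_g}''(h)>0$ unless $h$ is proportional to $Ric$. Your write-up is in fact slightly more careful than the paper's, since you explicitly justify $h\neq 0$ via injectivity of $S$ and the existence of a nonzero twisted harmonic representative, and you correctly record the second variation as an equality (the paper writes $\geqslant$, carrying over the inequality from Theorem~\ref{nec} where the $|div_f h|^2$ term was merely dropped rather than zero).
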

\begin{proof}

	 Similar to case $\Delta_{f,L}h=0$ in Theorem (\ref{nec}) and because  $\Delta_{f,L}S(\omega)=0$ and $div_{f}S(\omega)=0$ (see \cite{hall}Lemma(4.1),Proposition(4.2),Lemma(4.3)) for $h=S(\omega)$ we conclude that
	 
	 \begin{eqnarray}
	 	{\nu _g}^{''}(h)&\geqslant&\int_{M} \frac{1}{2\tau}|h|^{2}dm  -\frac{({\int_{M} { < Ric,h > dm} })^2}{{2\tau\int_{M}  {|Ric|^{2}dm} }} \nonumber\\
	 	&\geqslant&\Big(\int_{M} \frac{1}{2\tau}|h|^{2}dm\int_{M} 2\tau  {|Ric|^{2}dm}  -({\int_{M} { < Ric,h > dm} })^2\Big)\frac{1}{{{2\tau\int_{M}  {|Ric|^{2}dm} }}} \nonumber\\
	 	&\geqslant&\Big(\int_{M} |h|^{2}dm\int_{M}  {|Ric|^{2}dm}  -({\int_{M} { < Ric,h > dm} })^2\Big)\frac{1}{{{2\tau\int_{M}  {|Ric|^{2}dm} }}} \nonumber
	 \end{eqnarray}
	 But according to Cauchy–Schwarz inequality we know that the last term is positive and is zero if and only if $h=\mu Ric , \mu\in \mathbb{R},\mu\not=0$ but we assume that  $h\not=\mu Ric , \mu\in \mathbb{R},\mu\not=0$, therefore $\nu^{''}_{g}(h)>0$ and soliton is unstable.
	 \end{proof}


\begin{thebibliography}{5}
\bibitem{berger}
Berger, M. and Ebin, D., Some decompositions of the space of symmetric tensors on a Riemannian manifold, J. Differential Geom., 3(1969), 379-392.
\bibitem{cao}
Cao, H.-D.,  Recent progress on Ricci solitons, Recent advances in geometric analysis,
 Adv. Lect. Math. (ALM), 11." Int. Press, Somerville, MA (2010), 1–38.
\bibitem{cao zhu}
Cao, H.-D and ZHU, M., On second variation of Perelman’s Ricci shrinker entropy, Math. Ann, 353 (2012), no. 3, 747–763.

\bibitem{hail}
Cao, H.-D., Hamilton,R., Ilmanen, T., Gaussian densities and stability for some Ricci solitons, arXiv preprint math/0404165 (2004).
\bibitem{sym}
Cao, H.-D, ; HE, C., Linear stability of Perelman's  $\nu$-entropy on symmetric spaces of compact type, arXiv preprint /1304.2697, (2013).
\bibitem{chow}
 Chow, B., Chu, S.-.C, Glikenstein, D.,  Guenther, C., Isenberg, J., Ivey, T., Knopf, D., Lu, P., Luo, F., Ni, L., The Ricci flow:
techniques and applications. Part I: Geometric aspects,  Mathematical Surveys and Monographs 135. Providence, RI: American Mathematical Society (AMS). xxiii, , 2007, 536.
\bibitem{alex}
Deruelle, A., Stability of non compact steady and expanding gradient Ricci solitons, Calculus of Variations and Partial Differential Equations volume 54,  (2015), 2367–2405.
\bibitem{futaki}
 Futaki, A. and Sano, Y., Lower diameter bounds for compact shrinking Ricci solitons, arXiv preprint/1007.1759.
\bibitem{hall}
Hall, S. and Murphy, T., On the linear stability of Ka¨ahler-Ricci solitons, Proc. Amer. Math.Soc. 139 (2011), 3327–3337.
\bibitem{formation}
Hamilton, R. S., The formation of singularities in the Ricci flow, Surveys in Differential Geometry (Cambridge, MA, 1993), 2, , International Press,Combridge, MA, 1995, 7-136.
\bibitem{koiso}
Koiso, N., Rigidity and stability of Einstein metrics–the case of compact symmetric spaces, Osaka J. Math. 17(1980), no. 1, 51–73.
\bibitem{kroncke}
Kr\"oncke, K., Stability and instability of Ricci solitons,  Calc. Var. PDE. 53, (2015), 265-287.
\bibitem{thesis}
Kr\"oncke, K., Stability of Einstein manifolds, Universit\"at Potsdam, PhD thesis, 2013.
\bibitem{lich}
 Lichnerowicz, A., Propagateurs et commutateurs en relativit\'e  g\'en\'erale, Inst.Hautes Etudes Sci. Publ. Math, 10 (1961), 293-344.
\bibitem{lott}
Lott, J., Some geometric properties of the Bakry-Emery-Ricci tensor, ´ Comment.Math. Helv. 78(2003) 865-883.
\bibitem{curv}
Munteanu, O. and Wang M.-T., The curvature of gradient Ricci solitons, Math. Res. Lett.,18(6), (2011), 1051–1069.

\bibitem{classif}
 Ni, L. and Wallach, N., On a classification of the gradient shrinking solitons, Math. Res. Lett. 15 (2008), 941-955.
 \bibitem{per}
 PERELMAN, G., The entropy formula for the Ricci flow and its geometric applications, arXiv preprint/ math/0211159, (2002).
 \bibitem{Demys}
  Petersen,P., Demystifying the Weitzenb\"ock curvature operator, 2011, http://www.math. ucla.edu/~petersen/BLWformulas.pdf.

 \bibitem{wyl}
  Petersen, P. and Wylie, P., On the classification of gradient Ricci solitons, Geom. Topol. 14 (2010), 2277-2300.
\bibitem{topping}
 Topping, T., Lectures on the Ricci flow, London Mathematical Society Lecture Note Series,vol. 325, Cambridge University Press, Cambridge, 2006.
\bibitem{step}
 Rovenski, V., Stepanov, S. and  Tsyganok, I.,   On the geometry in the large of Lichnerowicz type Laplacians and its applications,  Balkan Journal of Geometry and Its Applications, Vol.25, No.2, 2020,  76-93.




 \end{thebibliography}
\end{document}